\theoremstyle{plain}
\newtheorem{theorem}{Theorem}[section]
\newtheorem{lemma}[theorem]{Lemma}
\newtheorem{proposition}[theorem]{Proposition}
\theoremstyle{definition}
\newtheorem{definition}[theorem]{Definition}
\newtheorem{corollary}[theorem]{Corollary}
\newtheorem{example}{\sc Example}
\theoremstyle{remark}
\newtheorem{remark}{\sc Remark}
\theoremstyle{case}
\def\pi{positive implicative }
\date{}
\begin{document}

\title{\bf {Conjugate $L$-Subgroups of an $L$-group and their Applications to Normality and Normalizer}}
\author{\textbf{Iffat Jahan$^1$ and Ananya Manas $^2$} \\\\ 
	$^{1}$
	Department of Mathematics, Ramjas College\\
	University of Delhi, Delhi, India \\
	ij.umar@yahoo.com \\\\
	$^{2}$Department of Mathematics, \\
	University of Delhi, Delhi, India \\
	anayamanas@gmail.com 
	\\    }
\date{}
\maketitle

\begin{abstract}
	In this paper, the notion of the conjugate of an $L$-subgroup by an $L$-point has been introduced. Then, several properties of conjugate $L$-subgroups have been studied analogous to their group-theoretic counterparts. Also, the notion of conjugacy has been investigated in the context of normality of $L$-subgroups. Furthermore, some important relationships between conjugate $L$-subgroups and normalizer have also been established. Finally, the normalizer of an $L$-subgroup of an $L$-group has been defined by using the notion of  conjugate $L$-subgroups. \\\\
	{\bf Keywords:} $L$-algebra; $L$-subgroup; Generated $L$-subgroup; Normal $L$-subgroup; Normalizer; Conjugate $L$-subgroup; Maximal $L$-subgroup.
	
\end{abstract}

\section{Introduction}

\noindent The studies of fuzzy algebraic structures began in 1971 when Rosenfeld \cite{rosenfeld_fuzzy} applied the notion of fuzzy sets to subgroupoids and subgroups. In 1981, Liu \cite{liu_op}, replaced the closed unit interval with  of 'lattices' in the definition of fuzzy sets and introduced the notion of lattice valued fuzzy subgroups. Subsequently, a number of researchers investigated fuzzy algebraic structures and generalized various concepts to the fuzzy setting. We mention here that in majority of these studies, the parent structure was considered to be a classical group rather than an $L$(fuzzy)-group. This setting has a significant limitation that it does not allow  the formulation of various concepts from classical group theory to the fuzzy (lattice valued)  group theory. This drawback can be easily removed by taking the parent structure to be a $L$(fuzzy)-group rather than an ordinary group. Indeed, in \cite{ajmal_gen, ajmal_char, ajmal_nc, ajmal_nil, ajmal_nor, ajmal_sol}, Ajmal and Jahan have introduced and studied various algebraic structures in $L$-setting specifically keeping in view their compatibility. Additionally, the authors of this paper have continued this research in \cite{jahan_max, jahan_app} by introducing the maximal and Frattini $L$-subgroups of an $L$-group and exploring their relationship with various structures in $L$-group theory. This paper is a continuation of similar studies. 

The notion of the conjugate of a fuzzy group was introduced by Mukherjee and Bhattacharya \cite{mukherjee_some} in 1986. 
A similar definition has also been given in \cite{shukla_con} in 2013.  However, in their definition the notion of conjugate fuzzy subgroup of an ordinary group has been discussed rather than conjugate fuzzy subgroups of a fuzzy group. The conjugate fuzzy subgroup has been defined with respect to an element of the classical group instead of the fuzzy group itself. In \cite{ajmal_nil}, Ajmal and Jahan have provided a new definition of the normalizer of an $L$(fuzzy)-subgroup of an $L$-subgroup that is compatible with the corresponding notions in classical group theory. It is well known that the concept of conjugate subgroups and normalizers have important relationship in classical group theory. We wish to explore such relationship in the studies of $L$(fuzzy)-groups as well. Here we note that notion of conjugate fuzzy subgroup given in  \cite{mukherjee_some, shukla_con} cannot be used to extend several significant concepts to the $L$(fuzzy) setting such as the notions of pronormal and abnormal subgroups, etc. We remove these limitations by firstly taking the parent structure to be an $L$-group $\mu$ and secondly by defining the conjugate of the $L$-subgroup by an $L$-point in $\mu$ rather than an element of the ordinary group $G$. Here, we remark that the definition of the conjugate of an $L$-subset $\eta$ by another $L$-subset $\theta$ has been explored in \cite{jahan_nil}.

We begin our work in section 3 by introducing the notion of the conjugate of an $L$-subgroup by an $L$-point in the parent $L$-group $\mu$. Moreover, we show that the conjugate $L$-subset so formed defines an $L$-subgroup of $\mu$. This notion of conjugate $L$-subgroup  has been illustrated through an example. Next, It has been shown that the image as well as the pre-image of a conjugate $L$-subgroup of an $L$-group under a group homomorphism are themselves conjugate $L$-subgroups. Then, a level subset characterization for conjugate $L$-subgroups has been developed. We end the section by exploring the properties of maximality under conjugacy. 

In section 4, we investigate the relationship between the notions of normality and conjugacy of $L$-subgroups. Firstly, we show that $\eta$ is a normal $L$-subgroup of $\mu$ if and only if every conjugate $L$-subgroup of $\eta$ is contained in $\eta$. Moreover, the equality holds if tip of $\eta$ is equal to the tip of the conjugate $L$-subgroup. Next, we exhibit a significant relationship between the normalizer of a conjugate $L$-subgroup, $N(\eta^{a_z})$, and the conjugate of the normalizer of the $L$-subgroup, $N(\eta)^{a_z}$. This has also been demonstrated with the help of an example. Next, we provide a new definition of the normalizer of an $L$-subgroup using the notion of conjugacy developed in this paper. We conclude the section by exhibiting this definition by an example.   

\section{Preliminaries}

Throughout this paper, $L = \langle L, \leq, \vee, \wedge  \rangle$ denotes a completely distributive lattice where '$\leq$' denotes the partial ordering on $L$ and '$\vee$'and '$\wedge$' denote, respectively, the join (supremum) and meet (infimum) of the elements of $L$. Moreover, the maximal and minimal elements of $L$ will be denoted by $1$ and $0$, respectively. The concept of completely distributive lattice can be found in any standard text on the subject \cite{gratzer_lattices}. 

The notion of a fuzzy subset of a set was introduced by Zadeh \cite{zadeh_fuzzy} in 1965. In 1967, Goguen \cite{goguen_sets} extended this concept to $L$-fuzzy sets. In this section, we recall the basic definitions and results associated with $L$-subsets that shall be used throughout this work. These definitions can be found in chapter 1 of \cite{mordeson_comm}.

Let $X$ be a non-empty set. An $L$-subset of $X$ is a function from $X$  into $L$. The set of  $L$-subsets of $X$ is called  the $L$-power set of $X$ and is denoted by $L^X$.  For  $\mu \in L^X, $  the set $ \lbrace\mu(x) \mid x \in X \rbrace$  is called the image of $\mu$  and is denoted by  Im $\mu $. The tip and tail of $ \mu $  are defined as $\bigvee \limits_{x \in X}\mu(x)$ and $\bigwedge \limits_{x \in X}\mu(x)$, respectively. An $L$-subset $\mu$ of $X$ is said to be contained in an $L$-subset $\eta$  of $X$ if  $\mu(x)\leq \eta (x)$ for all $x \in X$. This is denoted by $\mu \subseteq \eta $.  For a family $\lbrace\mu_{i} \mid i \in I \rbrace$  of $L$-subsets in  $X$, where $I$  is a non-empty index set, the union $\bigcup\limits_{i \in I} \mu_{i} $    and the intersection  $\bigcap\limits_{i \in I} \mu_{i} $ of  $\lbrace\mu_{i} \mid i \in I \rbrace$ are, respectively, defined by
\begin{center}
	$\bigcup\limits_{i \in I} \mu_{i}(x)= \bigvee\limits_{i \in I} \mu(x)  $ \quad and \quad $\bigcap\limits_{i \in I} \mu_{i} (x)= \bigwedge\limits_{i \in I} \mu(x) $
\end{center}
for each  $x \in X $. If  $\mu \in L^X $  and  $a \in L $,  then the level  subset $\mu_{a}$ of $\mu$  is defined as
\begin{center}
	$\mu_{a}= \lbrace x \in X \mid \mu (x) \geq a\rbrace.$
\end{center}
 For $\mu, \nu \in L^{X} $, it can be verified easily that if $\mu\subseteq \nu$, then $\mu_{a} \subseteq \nu_{a} $ for each $a\in L $.

For $a\in L$ and $x \in X$, we define $a_{x} \in L^{X} $ as follows: for all $y \in X$,
\[
a_{x} ( y ) =
\begin{cases}
	a &\text{if} \ y = x,\\
	0 &\text{if} \ y\ne x.
\end{cases}
\]
$a_{x} $ is referred to as an $L$-point or $L$-singleton. We say that $a_{x} $ is an $L$-point of $\mu$ if and only if
$\mu( x )\ge a$ and we write $a_{x} \in \mu$. 

Let $S$ be a groupoid. The set product $\mu \circ \eta$   of $\mu, \eta \in L^S$ is an $L$-subset of $S$ defined by
\begin{center}
	$\mu \circ \eta (x) = \bigvee \limits_{x=yz}\lbrace\mu (y) \wedge \eta (z) \rbrace.$
\end{center}

\noindent Note that if $x$ cannot be factored as  $x=yz$  in $S$, then  $\mu \circ \eta (x)$, being  the least upper bound of the empty set, is zero. It can be verified that the set product is associative in  $L^S$  if $S$ is a semigroup.

Let $f$ be a mapping from a set $X$ to a set $Y$. If $\mu \in L ^{X}$ and $\nu \in L^{Y}$, then the image $f(\mu )$
of $\mu $ under $f$ and the preimage $f^{-1} (\nu )$ of $\nu $ under $f$ are $L$-subsets of $Y$ and $X$ respectively, defined by
\[
f(\mu )(y)=\bigvee\limits_{x\in f^{-1} (y)} \{\mu (x)\} \text{ and } f^{-1} (\nu )(x)=\nu (f(x)).
\]
Again,  if $f^{-1} (y)=\phi $,
then $f(\mu )(y)$ being the least upper bound of the empty set, is zero.

Throughout this paper, $G$ denotes an ordinary group with the identity element `$e$' and $I$ denotes a non-empty indexing set. Also, $1_A$ denotes the characteristic function of a non-empty set $A$.

In 1971, Rosenfeld \cite{rosenfeld_fuzzy} applied the notion of fuzzy sets to groups in order to introduce the fuzzy subgroup of a group. Liu \cite{liu_op}, in 1981, extended the notion of fuzzy subgroups to $L$-fuzzy subgroups ($L$-subgroups), which we define below.   

\begin{definition}
	Let $\mu \in L ^G $. Then, $\mu $ is called an $L$-subgroup of $G$ if for each $x, y\in G$,
	\begin{enumerate}
		\item[({i})] $\mu (xy)\ge \mu (x)\wedge \mu (y)$,
		\item[({ii})] $\mu (x^{-1} )=\mu (x)$.
	\end{enumerate}
	The set of $L$-subgroups of $G$ is denoted by $L(G)$. Clearly, the tip of an $L$-subgroup
	is attained at the identity element of $G$.
\end{definition}

\begin{theorem}(\cite{mordeson_comm}, Lemma 1.2.5)
	\label{lev_gp}
	Let $\mu \in L ^G $. Then, $\mu $ is an $L$-subgroup of $G$ if and only if each non-empty level subset $\mu_{a} $ is a subgroup of $G$.
\end{theorem}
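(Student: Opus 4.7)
The plan is to prove both directions directly from the definitions, using the level subset $\mu_a = \{x \in G \mid \mu(x) \geq a\}$ as the bridge between the pointwise inequalities defining an $L$-subgroup and the closure conditions defining an ordinary subgroup.

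For the forward direction, I would assume $\mu \in L(G)$ and fix a level $a \in L$ with $\mu_a \neq \emptyset$. Taking $x, y \in \mu_a$, I have $\mu(x) \geq a$ and $\mu(y) \geq a$, so condition (i) gives $\mu(xy) \geq \mu(x) \wedge \mu(y) \geq a$, hence $xy \in \mu_a$. Condition (ii) gives $\mu(x^{-1}) = \mu(x) \geq a$, so $x^{-1} \in \mu_a$. Thus $\mu_a$ is closed under the group operations and inversion, making it a subgroup of $G$.

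For the reverse direction, I would assume every non-empty $\mu_a$ is a subgroup of $G$ and verify (i) and (ii) pointwise. Given $x, y \in G$, set $a = \mu(x) \wedge \mu(y)$; then $x, y \in \mu_a$, so $\mu_a$ is non-empty and hence (by hypothesis) a subgroup. Therefore $xy \in \mu_a$, i.e.\ $\mu(xy) \geq a = \mu(x) \wedge \mu(y)$, proving (i). For (ii), set $b = \mu(x)$; then $x \in \mu_b \neq \emptyset$, so $x^{-1} \in \mu_b$, giving $\mu(x^{-1}) \geq \mu(x)$. Replacing $x$ by $x^{-1}$ and using $(x^{-1})^{-1} = x$ yields the reverse inequality, so $\mu(x^{-1}) = \mu(x)$.

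There is no serious obstacle here; the argument is essentially symmetric and uses only the definitions. The only minor point worth noting is that in the reverse direction one must choose the level $a$ carefully as the meet $\mu(x) \wedge \mu(y)$ (resp.\ $\mu(x)$) so that the relevant elements sit inside a non-empty level subset. Since $L$ is a completely distributive lattice, meets are well defined and this choice is unproblematic.
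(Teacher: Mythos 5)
Your proof is correct and is the standard argument; the paper itself gives no proof of this statement, simply citing it as Lemma 1.2.5 of Mordeson--Malik, and your two-directional argument via the levels $a = \mu(x)\wedge\mu(y)$ and $b=\mu(x)$ is exactly the classical proof found there. No gaps.
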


\begin{theorem}(\cite{mordeson_comm}, Theorems 1.2.10, 1.2.11)
	\label{hom_gp}
	Let $f : G \rightarrow H$ be a group homomorphism. Let $\mu \in L(G)$ and $\nu \in L(H)$. Then, $f(\mu) \in L(H)$ and $f^{-1}(\nu) \in L(G)$.
\end{theorem}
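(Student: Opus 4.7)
The plan is to verify the two defining conditions of an $L$-subgroup (closure under product and invariance under inversion) separately for $f^{-1}(\nu)$ and $f(\mu)$, making essential use of the homomorphism property of $f$ and the complete distributivity of $L$.

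I would first handle the preimage case, which is essentially pointwise. Since $f^{-1}(\nu)(x) = \nu(f(x))$ by definition, for any $x, y \in G$ the identities $f(xy) = f(x)f(y)$ and $f(x^{-1}) = f(x)^{-1}$ together with the fact that $\nu \in L(H)$ yield
\[
f^{-1}(\nu)(xy) = \nu(f(x)f(y)) \geq \nu(f(x)) \wedge \nu(f(y)) = f^{-1}(\nu)(x) \wedge f^{-1}(\nu)(y),
\]
and $f^{-1}(\nu)(x^{-1}) = \nu(f(x)^{-1}) = \nu(f(x)) = f^{-1}(\nu)(x)$. This half is routine.

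For the image, I would fix $y_1, y_2 \in H$ and unpack $f(\mu)(y_1 y_2) = \bigvee_{z \in f^{-1}(y_1 y_2)} \mu(z)$. The key observation is that whenever $x_i \in f^{-1}(y_i)$, the product $x_1 x_2$ lies in $f^{-1}(y_1 y_2)$, so that
\[
f(\mu)(y_1 y_2) \;\geq\; \bigvee_{x_1 \in f^{-1}(y_1),\, x_2 \in f^{-1}(y_2)} \bigl( \mu(x_1) \wedge \mu(x_2) \bigr).
\]
The main obstacle is to push the joint supremum through the meet in order to recognise the right-hand side as $f(\mu)(y_1) \wedge f(\mu)(y_2)$. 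Here I would invoke the complete distributivity of $L$ stated in the preliminaries, which gives
\[
\bigvee_{x_1, x_2} \bigl( \mu(x_1) \wedge \mu(x_2) \bigr) = \Bigl( \bigvee_{x_1 \in f^{-1}(y_1)} \mu(x_1) \Bigr) \wedge \Bigl( \bigvee_{x_2 \in f^{-1}(y_2)} \mu(x_2) \Bigr) = f(\mu)(y_1) \wedge f(\mu)(y_2).
\]
If either $f^{-1}(y_i)$ is empty, then $f(\mu)(y_i) = 0$ by the convention fixed in the preliminaries, and the required inequality is immediate.

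For inversion, I would use the fact that $x \mapsto x^{-1}$ is a bijection between $f^{-1}(y^{-1})$ and $f^{-1}(y)$, since $f(x) = y^{-1}$ iff $f(x^{-1}) = y$. Re-indexing the supremum along this bijection and applying $\mu(x^{-1}) = \mu(x)$ yields
\[
f(\mu)(y^{-1}) = \bigvee_{x \in f^{-1}(y^{-1})} \mu(x) = \bigvee_{z \in f^{-1}(y)} \mu(z^{-1}) = \bigvee_{z \in f^{-1}(y)} \mu(z) = f(\mu)(y).
\]
Thus $f(\mu)$ satisfies both axioms. The heart of the whole argument is the single appeal to complete distributivity in the product step; everything else is a direct manipulation of the definitions of image, preimage, and $L$-subgroup.
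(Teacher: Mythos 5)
Your proof is correct; the paper states this result as a cited preliminary (Mordeson--Malik, Theorems 1.2.10 and 1.2.11) without reproducing a proof, and your argument is the standard one: the preimage case is pointwise, and the image case rests on products of preimages lying in the preimage of the product together with the infinite distributive law $\bigl(\bigvee_i a_i\bigr)\wedge\bigl(\bigvee_j b_j\bigr)=\bigvee_{i,j}(a_i\wedge b_j)$, which complete distributivity supplies. The empty-preimage convention and the inversion bijection are both handled properly, so there is nothing to add.
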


It is well known in literature that the intersection of an arbitrary family of $L$-subgroups of a group is an $L$-subgroup of the given group.

The concept of normal fuzzy subgroup of a group was introduced by Liu \cite{liu_inv} in 1982. We define the normal $L$-subgroup of a group $G$ below:  

\begin{definition}
	Let $\mu\in L(G)$. Then, $\mu $ is called a normal $L$-subgroup of $G$ if for all $x, y \in  G$, $\mu ( xy ) = \mu ( yx )$.
\end{definition}

\noindent The set of normal $L$-subgroups of $G$ is denoted by $NL(G)$. 

\begin{theorem}(\cite{mordeson_comm}, Theorem 1.3.3)
	\label{lev_norgp}
	Let $\mu \in L{(G)}$. Then, $\mu \in NL(G)$ if and only if each non-empty level subset $\mu_a$ is a normal subgroup of $G$.	
\end{theorem}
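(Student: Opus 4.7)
The plan is to reduce both implications to the level-subset characterisation already established in Theorem~\ref{lev_gp}: a non-empty $\mu_a$ is automatically a subgroup when $\mu\in L(G)$, so in each direction I only need to transfer the normality condition between its pointwise form ($\mu(xy)=\mu(yx)$) and its level-wise form (conjugation closure of $\mu_a$). Both transfers rely on the elementary group identity $yx = y(xy)y^{-1}$, which realises $yx$ as the conjugate of $xy$ by $y$.

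For the forward direction, I would assume $\mu\in NL(G)$ and fix a non-empty level subset $\mu_a$, which is already a subgroup by Theorem~\ref{lev_gp}. Given $x\in\mu_a$ and $g\in G$, applying the defining identity $\mu(uv)=\mu(vu)$ with $u=g$ and $v=xg^{-1}$ gives
$$\mu(gxg^{-1}) = \mu\bigl((xg^{-1})g\bigr) = \mu(x) \geq a,$$
so $gxg^{-1}\in\mu_a$ and $\mu_a$ is normal in $G$.

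For the converse, suppose every non-empty level subset of $\mu$ is a normal subgroup of $G$, and fix $x,y\in G$. Setting $a=\mu(xy)$, we have $xy\in\mu_a$, so $\mu_a$ is non-empty and hence normal; using $yx = y(xy)y^{-1}$, normality forces $yx\in\mu_a$, i.e.\ $\mu(yx)\geq\mu(xy)$. Exchanging the roles of $x$ and $y$ yields the reverse inequality, so $\mu(xy)=\mu(yx)$ and $\mu\in NL(G)$. I do not expect any real obstacle: the argument is a routine level-set transfer, and the only algebraic input is the conjugacy identity above. No special treatment is needed when $\mu(xy)$ equals the bottom of $L$, since the chosen level subset is always inhabited by $xy$ itself.
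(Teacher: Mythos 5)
Your argument is correct and is the standard one; the paper itself cites this result from Mordeson--Malik (Theorem 1.3.3) without reproducing a proof, and your level-set transfer via the conjugacy identity $yx=y(xy)y^{-1}$ is exactly the textbook route. Both directions check out, including the observation that $xy\in\mu_{\mu(xy)}$ guarantees non-emptiness of the relevant level subset.
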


Let $\eta, \mu\in L^{{G}}$ such that $\eta\subseteq\mu$. Then, $\eta$ is said to be an $L$-subset of $\mu$. The set of all $L$-subsets of $\mu$ is denoted by $L^{\mu}.$
Moreover, if $\eta,\mu\in L(G)$ such that  $\eta\subseteq \mu$, then $\eta$ is said to be an $L$-subgroup of $\mu$. The set of all $L$-subgroups of $\mu$ is denoted by $L(\mu)$. It is well known that the intersection of an arbitray family of $L$-subgroup of an $L$-group $\mu$ is again an $L$-subgroup of $\mu$.
 \begin{definition}(\cite{ajmal_char})
 	Let $\eta \in L ^{\mu}$.  Then, the $L$-subgroup of $\mu$ generated by $\eta $ is defined as the smallest $L$-subgroup of $\mu$
 	which contains $\eta $. It is denoted by $\langle \eta \rangle $, that is,
 	\[
 	\langle \eta \rangle =\cap\{\eta _{{i}} \in L(\mu) \mid \eta \subseteq \eta _{i}\}.
 	\]
 \end{definition}

From now onwards, $\mu$ denotes an $L$-subgroup of $G$ which shall be considered as the parent $L$-group. 

\begin{definition}(\cite{ajmal_sol}) 
	Let $\eta\in L(\mu)$ such that $\eta$ is non-constant and $\eta\ne\mu$. Then, $\eta$ is said to be a proper $L$-subgroup of $\mu$.
\end{definition}

\noindent Clearly, $\eta$ is a proper $L$-subgroup of $\mu$ if and only if $\eta$ has distinct tip and tail and $\eta\ne\mu$.

\begin{definition}(\cite{ajmal_nil})
	Let $\eta \in L(\mu)$. Let $a_0$ and $t_0$ denote the tip and tail of $\eta$, respectively. We define the trivial $L$-subgroup of $\eta$ as follows:
	\[ \eta_{t_0}^{a_0}(x) = \begin{cases}
		a_0 & \text{if } x=e,\\
		t_0 & \text{if } x \neq e.
	\end{cases} \]
\end{definition}

\begin{theorem}(\cite{ajmal_nil}, Theorem 2.1)
	\label{lev_sgp}
	Let $\eta \in L^\mu$. Then, $\eta\in L(\mu)$ if and only if each non-empty level subset $\eta_a$  is a subgroup of $\mu_a$.
		
\end{theorem}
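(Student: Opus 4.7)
The plan is to reduce the statement to Theorem \ref{lev_gp} applied to both $\mu$ and $\eta$, since those results already characterize when a level subset is a subgroup of $G$. The extra content here is the compatibility condition $\eta \subseteq \mu$, which must be traded back and forth against the relation ``$\eta_a$ is a subgroup of $\mu_a$''.

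For the forward direction, assume $\eta \in L(\mu)$. Since $\eta \in L(G)$, Theorem \ref{lev_gp} gives that each non-empty level subset $\eta_a$ is a subgroup of $G$. Similarly $\mu_a$ is a subgroup of $G$. Finally, from $\eta \subseteq \mu$ and the preliminary remark that containment of $L$-subsets passes to each level (``$\mu \subseteq \nu$ implies $\mu_a \subseteq \nu_a$''), we get $\eta_a \subseteq \mu_a$. A subgroup of $G$ contained in the subgroup $\mu_a$ is automatically a subgroup of $\mu_a$, which completes this direction.

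For the reverse direction, assume each non-empty $\eta_a$ is a subgroup of $\mu_a$. First, I need to recover $\eta \subseteq \mu$. Take any $x \in G$ and set $a = \eta(x)$. Then $x \in \eta_a$, so $\eta_a$ is non-empty; by hypothesis $\eta_a \subseteq \mu_a$, so $x \in \mu_a$, i.e., $\mu(x) \geq a = \eta(x)$. Hence $\eta \subseteq \mu$. Second, I need $\eta \in L(G)$. Each non-empty $\eta_a$ is a subgroup of $\mu_a$, and $\mu_a$ is a subgroup of $G$ by Theorem \ref{lev_gp} (applied to $\mu \in L(G)$); composing these, $\eta_a$ is a subgroup of $G$. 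Applying the other direction of Theorem \ref{lev_gp} to $\eta$ yields $\eta \in L(G)$, and combined with $\eta \subseteq \mu$ we conclude $\eta \in L(\mu)$.

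There is no real obstacle here; the only small point requiring care is the trivial-level case (e.g.\ $a$ equal to the bottom of $L$), where $\eta_a = G = \mu_a$ and the subgroup condition is automatic. Everything else is a direct appeal to Theorem \ref{lev_gp} together with the observation that the subgroup relation is transitive.
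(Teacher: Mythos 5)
Your argument is correct; the paper states this result without proof (it is quoted from the reference \cite{ajmal_nil}), and your reduction to Theorem \ref{lev_gp}, trading the containment $\eta\subseteq\mu$ against the level-set inclusions $\eta_a\subseteq\mu_a$, is exactly the standard way to establish it. One minor remark: the hypothesis $\eta\in L^{\mu}$ already means $\eta\subseteq\mu$, so the step in your reverse direction that recovers $\eta\subseteq\mu$ from the level-set condition is redundant (though harmless).
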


The normal fuzzy subgroup of a fuzzy group was introduced by Wu \cite{wu_normal} in 1981. We note that for the development of this concept, Wu\cite{wu_normal} preferred  $L$-setting. Below, we recall the notion of a normal $L$-subgroup of an $L$-group:

\begin{definition}(\cite{wu_normal})
	Let $\eta \in L(\mu)$. Then, we say that  $\eta$  is a normal $L$-subgroup of $\mu$   if  
	\begin{center}
		$\eta(yxy^{-1}) \geq \eta(x)\wedge \mu(y)$ for  all  $x,y \in G.$
	\end{center}
\end{definition}

\noindent The set of normal $L$-subgroups of $\mu$  is denoted by $NL(\mu)$. If $\eta \in NL(\mu)$, then we write\vspace{.2cm} $ \eta \triangleleft \mu$. 

Here, we mention that the arbitrary intersection of a family of normal $L$-subgroups of an $L$-group $\mu$ is again a normal $L$-subgroup of $\mu$. 

\begin{theorem}(\cite{mordeson_comm}, Theorem 1.4.3)
	\label{lev_norsgp}
	Let $\eta \in L(\mu)$. Then, $\eta\in NL(\mu)$ if and only if each non-empty level subset $\eta_a$  is a normal subgroup of $\mu_a$.
\end{theorem}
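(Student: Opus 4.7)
The plan is to prove the equivalence in two directions using the standard level set technique, with the ambient membership in $L(\mu)$ (and hence Theorem~\ref{lev_sgp}) doing the bookkeeping on the subgroup part, so that only the normality condition needs genuine work.

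For the forward implication, I assume $\eta \triangleleft \mu$ and fix a non-empty level $\eta_a$. Theorem~\ref{lev_sgp} already gives that $\eta_a$ is a subgroup of $\mu_a$, so I only need to verify that $y \eta_a y^{-1} \subseteq \eta_a$ for each $y \in \mu_a$. To this end, I would pick arbitrary $x \in \eta_a$ and $y \in \mu_a$, so that $\eta(x) \ge a$ and $\mu(y) \ge a$, and then apply the defining inequality
\[
\eta(yxy^{-1}) \;\ge\; \eta(x) \wedge \mu(y) \;\ge\; a,
\]
which immediately yields $yxy^{-1} \in \eta_a$, as required.

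For the converse, I assume that each non-empty level subset $\eta_a$ is a normal subgroup of the corresponding $\mu_a$; since $\eta \in L(\mu)$ is given, the only thing to establish is the normality inequality at an arbitrary pair $x, y \in G$. I would set $a := \eta(x) \wedge \mu(y)$. If $a = 0$, the inequality $\eta(yxy^{-1}) \ge a$ is automatic. Otherwise, $x \in \eta_a$ and $y \in \mu_a$, so that $\eta_a$ is non-empty and, by hypothesis, normal in $\mu_a$; hence $yxy^{-1} \in \eta_a$, giving $\eta(yxy^{-1}) \ge a = \eta(x) \wedge \mu(y)$.

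There is no real obstacle here: the proof is essentially a translation of the pointwise condition into the level-subset language. The only point that requires some care is ensuring the level subsets invoked are non-empty before applying the hypothesis in the converse, which is handled cleanly by separating the degenerate case $\eta(x) \wedge \mu(y) = 0$.
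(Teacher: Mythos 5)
Your proof is correct and is the standard level-subset argument; the paper itself does not prove this statement but cites it from Mordeson--Malik (Theorem 1.4.3), so there is no in-paper proof to compare against. Both directions are handled properly, including the reduction of the subgroup part to Theorem~\ref{lev_sgp} and the harmless degenerate case $\eta(x)\wedge\mu(y)=0$ in the converse.
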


Lastly, recall the following form \cite{ajmal_gen}:

\begin{theorem}(\cite{ajmal_gen}, Theorem 3.1)
	\label{gen}
	Let $\eta\in L^{^{\mu}}.$ Let $a_{0}=\mathop {\vee}\limits_{x\in G}{\left\{\eta\left(x\right)\right\}}$ and define an $L$-subset $\hat{\eta}$ of $G$ by
	\begin{center}
		$\hat{\eta}\left(x\right)=\mathop{\vee}\limits_{a \leq a_{0}}{\left\{a \mid x\in\left\langle \eta_{a}\right\rangle\right\}}$.
	\end{center}
	
	\noindent Then, $\hat{\eta}\in L(\mu)$ and  $\hat{\eta} =\left\langle \eta \right\rangle$. Moreover, tip~$\langle \eta\rangle=\text{tip~}\eta$.
\end{theorem}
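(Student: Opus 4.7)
The plan is to verify the three claims in order: $\hat{\eta}\in L(\mu)$, $\hat{\eta}=\langle\eta\rangle$, and the tip equality, using Theorems \ref{lev_gp} and \ref{lev_sgp} to pass between $L$-subgroups and their level subsets, and exploiting complete distributivity of $L$ when manipulating the sup in the definition of $\hat{\eta}$.

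First I would verify directly that $\hat{\eta}$ is an $L$-subgroup of $G$. For inverses, observe that $x\in\langle\eta_a\rangle$ iff $x^{-1}\in\langle\eta_a\rangle$, so the two sets over which the sup is taken coincide and $\hat{\eta}(x^{-1})=\hat{\eta}(x)$. For the product condition, set $A=\{a\le a_0 : x\in\langle\eta_a\rangle\}$ and $B=\{b\le a_0 : y\in\langle\eta_b\rangle\}$. By complete distributivity, $\hat{\eta}(x)\wedge\hat{\eta}(y)=\bigvee\{a\wedge b : a\in A, b\in B\}$. For each such pair, letting $c=a\wedge b$ gives $\eta_a\cup\eta_b\subseteq\eta_c$, so $x,y\in\langle\eta_c\rangle$, hence $xy\in\langle\eta_c\rangle$ and $c$ contributes to the sup defining $\hat{\eta}(xy)$. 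This yields $\hat{\eta}(xy)\ge\hat{\eta}(x)\wedge\hat{\eta}(y)$. Next, to see $\hat{\eta}\subseteq\mu$: since $\eta\subseteq\mu$, we have $\eta_a\subseteq\mu_a$ for every $a$, and $\mu_a$ is a subgroup of $G$ by Theorem \ref{lev_gp}, so $\langle\eta_a\rangle\subseteq\mu_a$; thus whenever $a$ contributes to $\hat{\eta}(x)$ we get $\mu(x)\ge a$, and taking the sup gives $\mu(x)\ge\hat{\eta}(x)$. Hence $\hat{\eta}\in L(\mu)$.

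For the equality $\hat{\eta}=\langle\eta\rangle$, I would establish containment in both directions. For $\eta\subseteq\hat{\eta}$: given $x\in G$ with $b=\eta(x)$, we have $b\le a_0$ and $x\in\eta_b\subseteq\langle\eta_b\rangle$, so $b$ is in the sup defining $\hat{\eta}(x)$, giving $\hat{\eta}(x)\ge\eta(x)$. Combined with the previous step, $\hat{\eta}$ is an $L$-subgroup of $\mu$ containing $\eta$, so $\langle\eta\rangle\subseteq\hat{\eta}$. For the reverse, let $\theta\in L(\mu)$ with $\eta\subseteq\theta$. Then $\eta_a\subseteq\theta_a$, and Theorem \ref{lev_sgp} ensures $\theta_a$ is a subgroup of $\mu_a$, so $\langle\eta_a\rangle\subseteq\theta_a$. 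Hence whenever $x\in\langle\eta_a\rangle$ with $a\le a_0$, we get $\theta(x)\ge a$; taking the sup shows $\theta(x)\ge\hat{\eta}(x)$, so $\hat{\eta}\subseteq\theta$. Intersecting over all such $\theta$ gives $\hat{\eta}\subseteq\langle\eta\rangle$.

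Finally, the tip equality is immediate: by definition every $a$ entering the sup satisfies $a\le a_0$, so $\hat{\eta}(x)\le a_0$ for every $x\in G$, and together with $\eta\subseteq\hat{\eta}$ this forces $\text{tip}\,\hat{\eta}=a_0=\text{tip}\,\eta$. The main obstacle I foresee is the product-inequality step, where one must be careful to invoke complete distributivity to split the meet of two suprema into a supremum of meets; everything else is a routine level-set chase using the two level characterizations already quoted.
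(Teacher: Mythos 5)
Your argument is correct, and it is the natural level-subset proof that the very formula for $\hat{\eta}$ suggests; note, however, that the paper itself states this theorem only as a quoted preliminary from the cited reference and gives no proof here, so there is nothing in this document to compare against. The one point worth making explicit is the case of an unattained supremum $a_0$, where $\eta_a$ may be empty for some $a \leq a_0$: with the usual convention $\langle \emptyset \rangle = \{e\}$ your containments $\langle \eta_a\rangle \subseteq \mu_a$ and $\langle \eta_a \rangle \subseteq \theta_a$ still hold because $\mu(e)$ and $\theta(e)$ dominate $a_0$, so the argument goes through unchanged.
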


\section{Conjugate $L$-subgroups}
	The notion of conjugate subgroups has played an important role in the evolution of classical group theory. We are motivated to contemplate such progress in $L$-group theory. For this, we firstly recall the definition of conjugate   of an $L$-subset by an $L$-subset from \cite{jahan_nil}:

\begin{definition}\label{dfncon}(\cite{jahan_nil})
	Let $\eta, \theta \in L^{\mu}$. Define an $L$-subset $\theta\eta\theta^{{-1} } $ of $G$ as follows:
	\[
	\theta\eta\theta^{{-1} } ( x ) = \bigvee\limits _{x = z y z^{-1} } \{ \eta ( y ) \wedge \theta ( z ) \}\quad\text{for each} \ x \in G.
	\]
	We call $\theta\eta\theta^{{-1} }$ the conjugate of $\eta$ by $\theta$. Clearly, $\theta\eta\theta^{{-1} } \subseteq \mu.$ Hence the $L$-subgroup $\langle\theta\eta\theta^{{-1} }\rangle \in L(\mu) $ and is denoted by $\eta ^ \theta.$ If $\eta$ and $\theta $ are $L$-subgroups of $\mu$, then clearly, in view of Theorem \ref{gen}, the tip of the $L$-subgroup $\eta^\theta$  is  $\eta^{\theta}(e)=\eta (e)\wedge \theta(e)$.
\end{definition}

\noindent The notion of conjugate fuzzy subgroups has appeared in \cite{shukla_con} in the year 2013. However, in this definition the notion of conjugate fuzzy subgroup of an ordinary group has been discussed rather than conjugate fuzzy subgroups of a fuzzy group. Below, we introduce the notion of conjugate  $L$-subgroups of an $L$-group. Firstly, observe that in view of the Definition \ref{dfncon}, the conjugate of an $L$-subgroup $\eta$ by an $L$-point $a_z \in \mu$ is  given by:
 \[ \eta^{a_z}(x) = a \wedge \eta(zxz^{-1}) \quad \text{ for all } x \in G. \]	
\noindent Here, we show that the conjugate of an $L$-subgroup by an $L$-point in $\mu$ is an $L$-subgroup of $\mu$. 

\begin{theorem}
	Let $\eta \in L(\mu)$ and $a_z$ be an $L$-point of $\mu$. Then, $\eta^{a_z}$ is an $L$-subgroup of $\mu$.
\end{theorem}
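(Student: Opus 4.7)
The plan is to verify the three defining requirements for $\eta^{a_z}$ to be an $L$-subgroup of $\mu$, working directly from the explicit formula $\eta^{a_z}(x) = a \wedge \eta(zxz^{-1})$ recorded just before the theorem. Namely, I want to show (i) $\eta^{a_z} \subseteq \mu$, (ii) $\eta^{a_z}(xy) \ge \eta^{a_z}(x) \wedge \eta^{a_z}(y)$, and (iii) $\eta^{a_z}(x^{-1}) = \eta^{a_z}(x)$ for all $x, y \in G$. Conditions (ii) and (iii) will establish $\eta^{a_z} \in L(G)$ via the definition of $L$-subgroup, while (i) upgrades this to membership in $L(\mu)$.

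For (ii), I would rewrite $z(xy)z^{-1} = (zxz^{-1})(zyz^{-1})$, apply the $L$-subgroup property of $\eta$ to get $\eta(z(xy)z^{-1}) \ge \eta(zxz^{-1}) \wedge \eta(zyz^{-1})$, and then meet with $a$ and redistribute using commutativity and idempotence of $\wedge$ in $L$. For (iii), I would use $(zxz^{-1})^{-1} = zx^{-1}z^{-1}$ together with the inverse-closure $\eta(y^{-1}) = \eta(y)$ of $\eta$. Both are one-line checks.

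The only non-trivial step is (i), and this is where the hypothesis $a_z \in \mu$ (equivalently $a \le \mu(z)$) is essential. The plan is to write $x = z^{-1}(zxz^{-1})z$ and invoke the $L$-subgroup conditions on $\mu$ to obtain
\[
\mu(x) \;\ge\; \mu(z^{-1}) \wedge \mu(zxz^{-1}) \wedge \mu(z) \;=\; \mu(z) \wedge \mu(zxz^{-1}).
\]
Since $a \le \mu(z)$ and $\eta(zxz^{-1}) \le \mu(zxz^{-1})$ because $\eta \subseteq \mu$, it follows that $\eta^{a_z}(x) = a \wedge \eta(zxz^{-1}) \le \mu(z) \wedge \mu(zxz^{-1}) \le \mu(x)$.

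I expect step (i) to be the main (albeit minor) obstacle, since it is the only place where one must combine three pieces of information—namely $a \le \mu(z)$, $\eta \subseteq \mu$, and closure of $\mu$ under products and inverses—whereas (ii) and (iii) each rely only on the subgroup axioms of $\eta$ and the distributivity of meet. No appeal to generated $L$-subgroups or to Theorem \ref{gen} is needed here, because the formula for $\eta^{a_z}$ is explicit and the verification is pointwise.
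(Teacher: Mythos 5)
Your proposal is correct and follows essentially the same route as the paper: closure under products via the identity $z(xy)z^{-1}=(zxz^{-1})(zyz^{-1})$, inverse-closure via $(zxz^{-1})^{-1}=zx^{-1}z^{-1}$, and containment in $\mu$ via $\mu(x)=\mu(z^{-1}(zxz^{-1})z)\ge\mu(z)\wedge\mu(zxz^{-1})\ge a\wedge\eta(zxz^{-1})$. No gaps.
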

\begin{proof}
	Let $x,y \in G$. Then,
	\begin{equation*}
		\begin{split}
			\eta^{a_z}(xy) &= a \wedge \eta(z(xy)z^{-1}) \\
			&= a \wedge \eta((zxz^{-1})(zyz^{-1})) \\
			&\geq a \wedge \eta(zxz^{-1}) \wedge \eta(zyz^{-1}) \qquad \qquad (\text{since $\eta$ is an $L$-subgroup of $\mu$}) \\
			&= (a \wedge \eta(zxz^{-1})) \wedge (a \wedge \eta(zyz^{-1})) \\
			&= \eta^{a_z}(x) \wedge \eta^{a_z}(y).  
		\end{split}
	\end{equation*}
	Similarly, we can verify that
	$
	\eta^{a_z}(x^{-1}) = \eta^{a_z}(x).  
	$
	Thus $\eta^{a_z} \in L(G).$ Now, to show that $\eta^{a_z} \subseteq \mu$, let $x \in G$. Then,
	\begin{equation*}
	\begin{split}
		\mu(x) &= \mu(z^{-1}(zxz^{-1})z) \\
		&\geq \mu(z) \wedge \mu(zxz^{-1}) \\
		&\geq a \wedge \eta(zxz^{-1}) \qquad \qquad\qquad (\text{since $a_z \in \mu$ and $\eta \subseteq \mu$}) \\
		&= \eta^{a_z}(x).
	\end{split}
	\end{equation*}
This proves that $\eta^{a_z} \in L(\mu)$.
\end{proof}

\begin{remark}\label{conj_tip}
	Clearly, $\text{tip}(\eta^{a_z}$) = $a \wedge \text{tip}(\eta)$, since
	$ \eta^{a_z}(e) = a \wedge \eta(zez^{-1}) = a \wedge \eta(e).$
\end{remark}
\noindent We demonstrate the notion of conjugate $L$-subgroups with the following example:

\begin{example}
	Let 
	\[ M=\{ l,f,a,b,c,d,u \} \]
	be the lattice given by Figure 1. Let $G=S_4$, the group of all permutations of the set $\{1,2,3,4\}$ with the identity element $e$. Let
	\[ D_4^1 = \langle (24), (1234) \rangle, D_4^2 = \langle (12), (1324) \rangle, D_4^3 = \langle (23), (1342) \rangle \]
	denote the dihedral subgroups of $G$ and $V_4 = \{e, (12)(34), (13)(24), (14)(23)\}$ denote the Klein-4 subgroup of $G$. 
	\begin{center}
		\includegraphics[scale=.7]{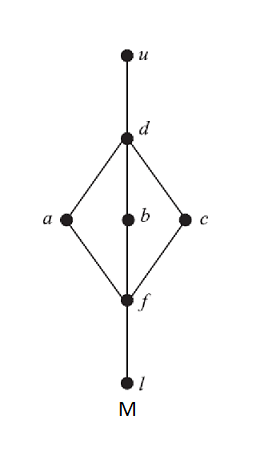}\\
		\centering \text{Figure 1}		
	\end{center}
	Define the $L$-subgroup $\mu$ of $G$ as follows:
	\[ \mu(x) = \begin{cases}
		u &\text{if } x \in V_4,\\
		d &\text{if } x \in S_4 \setminus V_4.
	\end{cases} \]
	Next, let $\eta$ be the $L$-subset of $\mu$ be defined by 
	\[ \eta(x) = \begin{cases}
		u &\text{if } x=e,\\
		d &\text{if } x \in V_4 \setminus \{e\},\\
		a &\text{if } x \in D_4^1 \setminus V_4,\\
		b &\text{if } x \in D_4^2 \setminus V_4,\\
		c &\text{if } x \in D_4^3 \setminus V_4,\\
		l &\text{if } x \in S_4 \setminus \mathop{\cup}\limits_{i=1}^3 D_4^i.
	\end{cases} \]
	Since each level subset $\eta_t$ is a subgroup of $\mu_t$ for all $t \leq u$, $\eta$ is an $L$-subgroup of $\mu$. Note that $d_{(123)} \in \mu$. We determine $\eta^{d_{(123)}}$.
	
	\noindent By definition, 
	\[ \eta^{d_{(123)}}(x) = d \wedge \eta((123)x(132)) \]
	for all $x \in S_4$. Hence
	\[	\eta^{d_{(123)}}(x) = \begin{cases}
			d \wedge u &\text{if } x=e,\\
			d \wedge d &\text{if } x \in V_4 \setminus \{e\},\\
			d \wedge b &\text{if } x \in D_4^1 \setminus V_4,\\
			d \wedge c &\text{if } x \in D_4^2 \setminus V_4,\\
			d \wedge a &\text{if } x \in D_4^3 \setminus V_4,\\
			d \wedge l &\text{if } x \in S_4 \setminus \mathop{\cup}\limits_{i=1}^3 D_4^i.
		\end{cases} \]
	Thus
	\[\eta^{d_{(123)}}(x) = \begin{cases}
		d &\text{if } x \in V_4,\\
		b &\text{if } x \in D_4^1 \setminus V_4,\\
		c &\text{if } x \in D_4^2 \setminus V_4,\\
		a &\text{if } x \in D_4^3 \setminus V_4,\\
		l &\text{if } x \in S_4 \setminus \mathop{\cup}\limits_{i=1}^3 D_4^i.
	\end{cases} \] 
\end{example}
\vspace{.2cm}
\noindent Below we discuss the set product of conjugate $L$-subgroups:
\begin{theorem}
	Let $\eta, \nu \in L(\mu)$ and $a_z$ be an $L$-point of $\mu$. Then, 
	\[ (\eta \circ \nu)^{a_z} = \eta^{a_z} \circ \nu^{a_z}. \]
\end{theorem}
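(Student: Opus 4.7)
The plan is to prove equality of the two $L$-subsets by evaluating both sides at an arbitrary $x \in G$ and showing the two expressions coincide via a bijection between factorizations of $x$ and factorizations of $zxz^{-1}$.

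First, I would unwind the left-hand side using the definition of the conjugate by an $L$-point, then expand the set product:
\[
(\eta \circ \nu)^{a_z}(x) \;=\; a \wedge (\eta \circ \nu)(zxz^{-1}) \;=\; a \wedge \bigvee_{zxz^{-1}=yw} \bigl\{ \eta(y) \wedge \nu(w) \bigr\}.
\]
Next, I would unwind the right-hand side by expanding the set product first, then pulling the factor $a$ out of each meet. Using complete distributivity of $L$ to move $a \wedge (\cdot)$ through the supremum:
\[
(\eta^{a_z} \circ \nu^{a_z})(x) \;=\; \bigvee_{x=y'w'} \bigl\{ a \wedge \eta(zy'z^{-1}) \wedge a \wedge \nu(zw'z^{-1}) \bigr\} \;=\; a \wedge \bigvee_{x=y'w'} \bigl\{ \eta(zy'z^{-1}) \wedge \nu(zw'z^{-1}) \bigr\}.
\]

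The key step is to match the two suprema. I would observe that the map $(y',w') \mapsto (zy'z^{-1}, zw'z^{-1})$ is a bijection between the set of factorizations $x = y'w'$ and the set of factorizations $zxz^{-1} = yw$; its inverse is $(y,w) \mapsto (z^{-1}yz, z^{-1}wz)$. Under this correspondence, $\eta(zy'z^{-1}) = \eta(y)$ and $\nu(zw'z^{-1}) = \nu(w)$, so the two indexed families of lattice elements are identical, hence their joins agree.

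There isn't really a deep obstacle here; the whole proof is bookkeeping. The one delicate point is the reindexing of the supremum, so I would state the bijection between factorizations explicitly before claiming the equality of the two joins. I would also note that pulling the scalar $a$ outside the supremum uses the (complete) distributivity of $L$, which is assumed in the preliminaries, so this step is justified. Putting the two computations together yields $(\eta \circ \nu)^{a_z}(x) = (\eta^{a_z} \circ \nu^{a_z})(x)$ for every $x \in G$, which is the desired equality.
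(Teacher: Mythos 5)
Your proposal is correct and follows essentially the same route as the paper's proof: expand the set product, pull $a$ through the supremum by complete distributivity, and reindex the factorizations of $x$ against those of $zxz^{-1}$ via conjugation by $z$. You are in fact slightly more explicit than the paper about the bijection between factorizations and about where distributivity is used, which is a virtue rather than a deviation.
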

\begin{proof}
	Let $g \in G$. Then,
	\begin{equation*}
	\begin{split}
		(\eta^{a_z} \circ \nu^{a_z})(g) &= \vee \left\{\eta^{a_z}(x) \wedge \nu^{a_z}(y) \mid xy = g \right\} \\
			&= \vee \left\{ \{ a \wedge \eta(zxz^{-1}) \} \wedge \{ a \wedge \nu(zyz^{-1})\} \mid xy=g \right\} \\
			&= a \wedge \left\{ \vee \left\{ \eta(zxz^{-1}) \wedge \nu(zyz^{-1}) \mid (zxz^{-1})(zyz^{-1}) = zgz^{-1} \right\} \right\} \\
			&= a \wedge \left\{ \vee \left\{ \eta(x_1) \wedge \nu(y_1) \mid x_1y_1 = zgz^{-1} \right\} \right\} \\
			&= a \wedge (\eta \circ \nu)(zgz^{-1}) \\
			&= (\eta \circ \nu)^{a_z}(g).			
	\end{split}
	\end{equation*}
	Hence the result.
\end{proof}

\noindent In Theorems \ref{hom_conj1} and \ref{hom_conj2}, we study the properties of conjugate $L$-subgroups under group homomorphisms.

\begin{theorem}\label{hom_conj1}
	Let $f : G \rightarrow H$ be a group homomorphism and $\mu \in L(G)$. Then, for $\eta \in L(\mu)$ and $a_z \in \mu$, the $L$-subgroup $f(\eta^{a_z})$ is a conjugate $L$-subgroup of $f(\eta)$ in $f(\mu)$. In fact, 
	\[ f(\eta^{a_z}) = f(\eta)^{a_{f(z)}}. \]
\end{theorem}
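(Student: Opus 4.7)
The plan is to establish the identity $f(\eta^{a_z}) = f(\eta)^{a_{f(z)}}$ by verifying it pointwise on $H$; the assertion that $f(\eta^{a_z})$ is a conjugate $L$-subgroup of $f(\eta)$ in $f(\mu)$ is then automatic, since $f(\mu), f(\eta)\in L(H)$ by Theorem \ref{hom_gp}, and $\eta\subseteq\mu$ gives $f(\eta)\subseteq f(\mu)$, i.e.\ $f(\eta)\in L(f(\mu))$. Before entering the computation, I would quickly note that $a_{f(z)}\in f(\mu)$: indeed
\[
f(\mu)(f(z))=\bigvee_{x\in f^{-1}(f(z))}\mu(x)\;\ge\;\mu(z)\;\ge\;a,
\]
since $a_z\in\mu$. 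This guarantees that the right-hand side $f(\eta)^{a_{f(z)}}$ is a legitimate conjugate $L$-subgroup of $f(\eta)$ inside $f(\mu)$.

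For the pointwise calculation, I would fix $y\in H$ and expand the left-hand side via the definitions of image and of conjugate by an $L$-point:
\[
f(\eta^{a_z})(y)=\bigvee_{x\in f^{-1}(y)}\eta^{a_z}(x)=\bigvee_{x\in f^{-1}(y)}\bigl(a\wedge\eta(zxz^{-1})\bigr).
\]
Because $L$ is completely distributive, $\wedge$ distributes over arbitrary joins, so $a$ can be pulled out to give $a\wedge\bigvee_{x\in f^{-1}(y)}\eta(zxz^{-1})$. Similarly, the right-hand side unfolds as
\[
f(\eta)^{a_{f(z)}}(y)=a\wedge f(\eta)\bigl(f(z)\,y\,f(z)^{-1}\bigr)=a\wedge\bigvee_{w\in f^{-1}(f(z)yf(z)^{-1})}\eta(w).
\]
The problem thus reduces to matching the two inner joins.

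To match them I would introduce the map $\phi:f^{-1}(y)\to f^{-1}(f(z)\,y\,f(z)^{-1})$ defined by $\phi(x)=zxz^{-1}$. Using that $f$ is a group homomorphism, $f(zxz^{-1})=f(z)f(x)f(z)^{-1}=f(z)\,y\,f(z)^{-1}$ whenever $f(x)=y$, so $\phi$ is well-defined; the assignment $w\mapsto z^{-1}wz$ is easily checked to be a two-sided inverse, so $\phi$ is a bijection between these two fibres. Re-indexing the join on the right through $\phi$ rewrites it as $\bigvee_{x\in f^{-1}(y)}\eta(zxz^{-1})$, which is precisely the inner join on the left, completing the equality.

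The main (mild) obstacle is exactly this fibre-wise bijection step: one must check that $\phi$ is surjective onto $f^{-1}(f(z)\,y\,f(z)^{-1})$, not merely that it lands inside it, since otherwise the right-hand join could \emph{a priori} strictly dominate the left-hand one. Once the homomorphism identity produces the inverse $w\mapsto z^{-1}wz$ explicitly, everything else in the argument is an application of the definitions and of the complete distributivity of $L$.
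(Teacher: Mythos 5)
Your proposal is correct and follows essentially the same route as the paper's proof: verify $a_{f(z)}\in f(\mu)$, expand both sides pointwise, pull $a$ out of the join by complete distributivity, and re-index the join via the correspondence $x\mapsto zxz^{-1}$ between the fibres $f^{-1}(y)$ and $f^{-1}(f(z)yf(z)^{-1})$. The only difference is that you make the fibre bijection (and in particular its surjectivity) explicit, a step the paper performs implicitly when it rewrites the indexing condition $f(x)=y$ as $f(zxz^{-1})=f(z)yf(z)^{-1}$ and substitutes $u=zxz^{-1}$.
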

\begin{proof}
	Firstly, note that $a_{f(z)} \in f(\mu)$, since
	\[ (f(\mu))(f(z)) = \vee\{ \mu(x) \mid f(x) = f(z) \} \geq \mu(z) \geq a. \] 
	Next, let $y \in H$. Then,
	\begin{equation*}
		\begin{split}
			f(\eta^{a_z})(y) &= \vee \{ \eta^{a_z}(x) \mid f(x) = y \} \\
			&= \vee \{ a \wedge \eta(zxz^{-1}) \mid f(x) = y \} \\
			&= \vee \{ a \wedge \eta(zxz^{-1}) \mid f(zxz^{-1}) = f(z)yf(z)^{-1} \}\\
			&= a \wedge \{\vee \{ \eta(u) \mid f(u) = f(z)yf(z)^{-1} \} \}~~(\text{since~} L \text{~is a completely distributive lattice~}) \\
			&= a \wedge f(\eta)(f(z)yf(z)^{-1}) \\
			&= f(\eta)^{a_{f(z)}}(y).
		\end{split}
	\end{equation*}
	Hence the result.
\end{proof}

\begin{theorem}\label{hom_conj2}
	Let $f: G \rightarrow H$ be a surjective group homomorphism and let $\mu \in L(H)$. Then, for $\eta \in L(\mu)$ and $a_z \in \mu$, the $L$-subgroup $f^{-1}(\eta^{a_z})$ is a conjugate $L$-subgroup of $f^{-1}(\eta)$ in $f^{-1}(\mu)$. In fact, 
	\[ f^{-1}(\eta^{a_z}) = f^{-1}(\eta)^{a_s},  \]
	where $s \in f^{-1}(z)$. 
\end{theorem}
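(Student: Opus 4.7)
The plan is to compute both sides of the claimed identity directly at an arbitrary point $x\in G$ and to observe that they coincide. Since $f$ is surjective, $f^{-1}(z)$ is non-empty, so one may fix $s\in f^{-1}(z)$. First I would verify that $a_s$ is an $L$-point of $f^{-1}(\mu)$, which reduces to the computation $f^{-1}(\mu)(s)=\mu(f(s))=\mu(z)\ge a$, the last inequality being exactly the hypothesis $a_z\in\mu$. Combined with Theorem \ref{hom_gp}, which ensures $f^{-1}(\eta)\in L(f^{-1}(\mu))$, this already implies that the right-hand side $f^{-1}(\eta)^{a_s}$ is a well-defined conjugate $L$-subgroup inside the parent $L$-group $f^{-1}(\mu)$ in the sense of the first theorem of this section.

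The substantive ingredient in the equality is simply that $f$ is a homomorphism, so
\[
f(sxs^{-1})=f(s)f(x)f(s)^{-1}=z\,f(x)\,z^{-1}
\]
for every $x\in G$. Unwinding the definitions, the left-hand side evaluated at $x$ is
\[
f^{-1}(\eta^{a_z})(x)=\eta^{a_z}(f(x))=a\wedge \eta\bigl(z\,f(x)\,z^{-1}\bigr),
\]
while the right-hand side is
\[
f^{-1}(\eta)^{a_s}(x)=a\wedge f^{-1}(\eta)(sxs^{-1})=a\wedge \eta\bigl(f(sxs^{-1})\bigr)=a\wedge \eta\bigl(z\,f(x)\,z^{-1}\bigr),
\]
and these expressions agree pointwise, yielding the claim.

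I do not foresee a real obstacle here. In contrast to Theorem \ref{hom_conj1}, where the image computation had to push an arbitrary supremum past the term $a\wedge(\cdot)$ and therefore relied on complete distributivity of $L$, the preimage case reduces to ordinary pointwise evaluation together with one use of the homomorphism property. The only subtle point is ensuring that $s$ exists, which is precisely where the surjectivity hypothesis is invoked; once $s$ is chosen, the identity is forced, and the final calculation also shows that the value $f^{-1}(\eta)^{a_s}(x)$ is independent of the particular choice of $s\in f^{-1}(z)$, which is what makes the statement of the theorem unambiguous.
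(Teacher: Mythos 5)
Your proposal is correct and follows essentially the same route as the paper: verify $a_s\in f^{-1}(\mu)$ via $f^{-1}(\mu)(s)=\mu(f(s))=\mu(z)\ge a$, then unwind both sides pointwise using $f(sxs^{-1})=zf(x)z^{-1}$. The paper presents the computation as a single chain of equalities rather than two parallel evaluations, but the content is identical, and your observations about surjectivity and the absence of any distributivity requirement are accurate.
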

\begin{proof}
	Let $s \in f^{-1}(z)$. Firstly, note that $a_s \in f^{-1}(\mu)$, since
	\[ f^{-1}(\mu)(s) = \mu(f(s)) = \mu(z) \geq a. \]
	Next, let $x \in G$. Then, 
	\begin{equation*}
		\begin{split}
			f^{-1}(\eta^{a_z})(x) &= \eta^{a_z}(f(x))\\
			&= a \wedge \eta(zf(x)z^{-1}) \\
			&= a \wedge \eta(f(s) f(x) f(s)^{-1})\\
			&= a \wedge \eta(f(sxs^{-1})) \\
			&= a \wedge f^{-1}(\eta)(sxs^{-1}) \\
			&= f^{-1}(\eta)^{a_s}(x).
		\end{split}
	\end{equation*}
This proves that $f^{-1}(\eta^{a_z}) = f^{-1}(\eta)^{a_s}$.
\end{proof}

\noindent Below, we provide a level subset characterization for conjugate $L$-subgroups.

\begin{theorem}\label{lvl_conj}
	Let $\eta, \nu \in L(\mu)$ and $a \in L$ such that $\text{tip}(\nu) = a \wedge \text{tip}(\eta)$. Then, $\nu=\eta^{a_z}$ for $a_z \in \mu$ if and only if $\nu_t = {\eta_t}^{z^{-1}}$ for all $t \leq \text{tip}(\nu)$.
\end{theorem}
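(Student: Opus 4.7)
My plan is to prove both implications by translating between membership in a level cut $\nu_t$ and the inequality $\nu(x) \geq t$, and then using the explicit formula $\eta^{a_z}(x) = a \wedge \eta(zxz^{-1})$ established earlier. The basic classical identity I rely on is that conjugation of an ordinary subgroup satisfies $\eta_t^{z^{-1}} = \{x \in G : zxz^{-1} \in \eta_t\}$, so $x \in \eta_t^{z^{-1}}$ is equivalent to $\eta(zxz^{-1}) \geq t$.

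For the forward direction, I assume $\nu = \eta^{a_z}$ and fix $t \leq \text{tip}(\nu) = a \wedge \text{tip}(\eta)$. In particular $t \leq a$, so the inequality $a \wedge \eta(zxz^{-1}) \geq t$ collapses to $\eta(zxz^{-1}) \geq t$. Reading this through the conjugation identity gives $x \in \nu_t$ if and only if $zxz^{-1} \in \eta_t$, i.e. $x \in \eta_t^{z^{-1}}$, so $\nu_t = \eta_t^{z^{-1}}$.

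For the converse, I recover $\nu$ pointwise from its level cuts via the standard identity $\nu(x) = \bigvee\{t \leq \text{tip}(\nu) : x \in \nu_t\}$. Substituting the hypothesis $\nu_t = \eta_t^{z^{-1}}$ and rewriting membership as an inequality,
\[ \nu(x) = \bigvee\{ t \leq \text{tip}(\nu) : \eta(zxz^{-1}) \geq t \} = \text{tip}(\nu) \wedge \eta(zxz^{-1}), \]
where the last equality holds because in any lattice, $\bigvee\{t : t \leq m_1 \text{ and } t \leq m_2\} = m_1 \wedge m_2$. Since $\eta(zxz^{-1}) \leq \text{tip}(\eta)$, this simplifies to $(a \wedge \text{tip}(\eta)) \wedge \eta(zxz^{-1}) = a \wedge \eta(zxz^{-1}) = \eta^{a_z}(x)$, giving $\nu = \eta^{a_z}$.

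The main delicate point is this converse: the tip hypothesis $\text{tip}(\nu) = a \wedge \text{tip}(\eta)$ is used essentially to collapse the reconstructed value $\text{tip}(\nu) \wedge \eta(zxz^{-1})$ to the target $a \wedge \eta(zxz^{-1})$. Without it, the level-cut data only determines $\nu$ up to the tip of $\nu$, and one could not pin down the factor $a$ in the formula for $\eta^{a_z}$. I must also take care that $L$ is only a completely distributive lattice and not a chain, so I avoid any case split on whether $\eta(zxz^{-1}) \geq t$ or not and instead carry out everything via joins and meets.
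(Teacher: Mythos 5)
Your proof is correct. The forward direction is the same as the paper's: both reduce $x\in\nu_t$ to $a\wedge\eta(zxz^{-1})\geq t$ and use $t\leq a$ to drop the $a$. For the converse the paper proceeds by two separate inclusions, each time instantiating the level-set hypothesis at a single well-chosen level ($b=\nu(x)$ to get $\nu\subseteq\eta^{a_z}$, then $b=a\wedge\eta(zxz^{-1})$ to get $\eta^{a_z}\subseteq\nu$), whereas you reconstruct $\nu(x)$ in one pass from all its level cuts via $\nu(x)=\bigvee\{t\leq\text{tip}(\nu)\mid x\in\nu_t\}$ and the lattice identity $\bigvee\{t\mid t\leq m_1,\ t\leq m_2\}=m_1\wedge m_2$. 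The two arguments use the same ingredients (level-cut membership as an inequality, plus $\eta(zxz^{-1})\leq\text{tip}(\eta)$ and the tip hypothesis to absorb $\text{tip}(\nu)$ into $a$), but your packaging makes the role of the hypothesis $\text{tip}(\nu)=a\wedge\text{tip}(\eta)$ more transparent — it is exactly what converts the reconstructed value $\text{tip}(\nu)\wedge\eta(zxz^{-1})$ into $a\wedge\eta(zxz^{-1})$ — and correctly avoids any chain assumption, since only joins of down-sets are involved.
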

\begin{proof}
	($\Rightarrow$) Let $\nu=\eta^{a_z}$. Let $t \leq \text{tip~}{(\nu)} = a \wedge \text{tip}(\eta)$ and let $x \in \nu_t$. Then, $\nu(x) \geq t$, that is, 
	\[ \eta^{a_z}(x) = a \wedge \eta(zxz^{-1}) \geq a \wedge \eta(e) = \text{tip~}(\nu)\geq t. \]
	This implies $\eta(zxz^{-1}) \geq t$, that is, $zxz^{-1} \in \eta_t$. Thus $x \in {\eta_t}^{z^{-1}}$. 
	Hence $\nu_t \subseteq {\eta_t}^{z^{-1}}.$
	To show the reverse inclusion, let $x \in {\eta_t}^{z^{-1}}$. Then, $x = z^{-1}gz$ for some $g \in \eta_t$.	This implies $zxz^{-1} = g \in \eta_t$, that is, $\eta(zxz^{-1}) \geq t$. Moreover, by assumption, $t \leq a$. Hence
	\[ \nu(x) = a \wedge \eta(zxz^{-1}) \geq t. \]
	Thus $x \in \nu_t$ and we conclude that $\nu_t = {\eta^{z^{-1}}_t}$.
	
	\noindent ($\Leftarrow$) Suppose that $\nu_t = {\eta_t}^{z^{-1}}$ for all $t \leq \text{tip}(\nu)$. Let $x \in G$ and let $\nu(x) = b$. Then, $b \leq \text{tip}(\nu)$ and by the hypothesis, $\nu_b = {\eta_b}^{z^{-1}}$. Thus $x \in {\eta_b}^{z^{-1}}$, that is, $x = z^{-1}gz$ for some $g \in \eta_b$. This implies	$zxz^{-1} = g \in \eta_b$, and hence $\eta(zxz^{-1}) \geq b = \nu(x)$. Thus 
	\[ \eta^{a_z}(x) = a \wedge \eta(zxz^{-1}) \geq a \wedge \nu(x) = \nu(x) \]
	and we conclude that $\nu \subseteq \eta^{a_z}$. For the reverse inclusion, let $x \in G$ and let $b = a \wedge \eta(zxz^{-1}) \leq a \wedge \eta(e)=\text{tip~}\nu$. Then, by hypothesis, $\nu_b = {\eta_b}^{z^{-1}}$, or equivalently, $\eta_b = {\nu_b}^z$. Now, $zxz^{-1} \in \eta_b = {\nu_b}^z$ implies	$zxz^{-1} = zgz^{-1}$ for some $g \in \nu_b$. Thus $x = g \in \nu_b$ and hence 
	\[ \nu(x) \geq b = a \wedge \eta(zxz^{-1}) = \eta^{a_z}(x). \]
	Thus $\eta^{a_z} \subseteq \nu$ and we conclude that $\nu = \eta^{a_z}$.
\end{proof}

\begin{theorem}
	Let $H$ and $K$ be subgroups of $G$. Then, $K$ is conjugate to $H$ in $G$ if and only if $1_K$ is conjugate to $1_H$ as  $L$-subgroups of $1_G$.
\end{theorem}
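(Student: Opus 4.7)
The statement asserts that for ordinary subgroups, classical conjugacy coincides with $L$-subgroup conjugacy of their characteristic functions inside $1_G$. The strategy is to unwind the definition of $(1_H)^{a_z}$ directly, using Remark \ref{conj_tip} to force the label $a$ of the conjugating $L$-point to equal $1$, after which the statement becomes essentially a tautology. An alternative route passes through Theorem \ref{lvl_conj}, noting that the only nontrivial level subset of $1_H$ (for $0<t\le 1$) is $H$ itself, and similarly for $1_K$; but either route requires the same preparatory observation on the tip.

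For the forward direction, suppose $K = gHg^{-1}$ for some $g \in G$. I would set $z = g^{-1}$, so that $K = z^{-1}Hz$, and observe that $1_z$ is an $L$-point of $1_G$ since $1_G(z) = 1$. A direct computation gives
\[
(1_H)^{1_z}(x) \;=\; 1 \wedge 1_H(zxz^{-1}) \;=\; 1_H(zxz^{-1}).
\]
A case split on whether $x \in K$ finishes it: if $x \in K$ then $zxz^{-1} \in zKz^{-1} = H$, so both sides equal $1$; otherwise $zxz^{-1} \notin H$ and both sides equal $0$. Hence $1_K = (1_H)^{1_z}$, which exhibits $1_K$ as a conjugate of $1_H$ in $1_G$.

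For the converse, suppose $1_K = (1_H)^{a_z}$ for some $L$-point $a_z$ of $1_G$. The critical step is pinning down $a$: by Remark \ref{conj_tip},
\[
1 \;=\; \text{tip}(1_K) \;=\; a \wedge \text{tip}(1_H) \;=\; a \wedge 1 \;=\; a,
\]
so $a = 1$. Consequently $1_K(x) = 1_H(zxz^{-1})$ for every $x \in G$, whence $x \in K$ if and only if $zxz^{-1} \in H$, i.e.\ if and only if $x \in z^{-1}Hz$. Therefore $K = z^{-1}Hz$, so $K$ and $H$ are conjugate subgroups of $G$.

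The argument is essentially bookkeeping once one recognizes that $a$ must equal $1$, and I do not anticipate any serious obstacle. The only care required is to keep straight the paper's convention in Definition \ref{dfncon}, where the conjugating element enters as $zxz^{-1}$; under this convention the classical conjugate $gHg^{-1}$ is produced by the $L$-point located at $z = g^{-1}$, as used above.
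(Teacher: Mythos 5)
Your proof is correct and follows essentially the same route as the paper: both directions unwind the definition $(1_H)^{a_z}(x) = a \wedge 1_H(zxz^{-1})$ and translate membership statements back and forth. The only difference is cosmetic — you pin down $a = 1$ at once by evaluating at the identity (via Remark \ref{conj_tip}), whereas the paper first rules out $a=0$ and then extracts $a=1$ during the element-wise inclusion argument; your version is slightly tidier but not a different proof.
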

\begin{proof}
$(\Rightarrow)$ Since $H$ and $K$ are conjugate in $G$,  there exists $z \in G$ such that $ K = H^z$.
	Since $z \in G$, $1_{z^{-1}} \in 1_G$. We claim that $1_K = ({1_H})^{1_{z^{-1}}}$.
	
	\noindent  Let $x \in G$. If $x \notin K$, then $1_K(x) = 0 \leq ({1_H})^{1_{z^{-1}}}(x)$.
	If $x \in K$, then $x=zhz^{-1}$ for some $h \in H$. This implies \[ ({1_H})^{1_{z^{-1}}}(x) = 1 \wedge 1_H(z^{-1}xz) = 1=1_K(x). \]
	So we conclude that $1_K \subseteq (1_H)^{1_{z^{-1}}}$. Similarly we can prove that $(1_H)^{1_{z^{-1}}} \subseteq 1_K$.
	
\noindent ($\Leftarrow$) Since $1_H$ and $1_K$ are conjugate $L$-subgroups in $1_G$,  there exists $a_z \in 1_G$ such that $1_K = {1_H}^{a_z}$.
	Note that $a > 0$, for if $a=0$, then $1_K(x) = a \wedge 1_H(zxz^{-1}) = 0$ for all $x \in G$, which contradicts the fact that $1_K(e) = 1$.
	\noindent We claim that $K = H^{z^{-1}}$. 
	
	\noindent Let $x \in K$. Then, $1_K(x) = 1$. This implies
	\[ 1 = {1_H}^{a_z}(x) = a \wedge {1_H}(zxz^{-1}). \]
	Thus $a = 1$ and $1_H(zxz^{-1}) = 1$. Hence $zxz^{-1} \in H$, that is, $x \in H^{z^{-1}}$. Consequently, $K \subseteq H^{z^{-1}}$.	For the reverse inclusion, let $x \in H^{z^{-1}}$. This implies,
	\[ {1_H}^{a_z}(x) = a \wedge 1_H(zxz^{-1}) = a. \]
	Thus $1_K(x) = a > 0$. It follows that $1_K(x) = 1$, that is, $x \in K$. Therefore, $ H^{z^{-1}} \subseteq K$. Hence the result.
\end{proof}

\noindent The notion of a maximal $L$-subgroup of an $L$-group has been introduced in \cite{jahan_max}. We recall the definition below: 

\begin{definition}(\cite{jahan_max})
	Let $\mu \in L(G)$. A proper $L$-subgroup $\eta$ of $\mu$ is said to be a maximal $L$-subgroup of $\mu$ if, whenever $\eta \subseteq \theta \subseteq \mu$ for some $\theta \in L(\mu)$, then either $\theta = \eta$ or $\theta = \mu$.
\end{definition}

\noindent Here, we discuss conjugate of a maximal $L$-subgroup of an $L$-group.

\begin{theorem}
Let $L$ be a chain. Let $\eta$ be a maximal $L$-subgroup of an $L$-group $\mu$ and $a_z$ be an $L$-point of $\mu$. Then, either $\eta^{a_z}=\mu^{a_z}$ or $\eta^{a_z}$ is a maximal $L$-subgroup of $\mu^{a_z}$. 
\end{theorem}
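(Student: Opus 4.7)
The plan is to assume $\eta^{a_z}\neq \mu^{a_z}$ and take any $\theta\in L(\mu^{a_z})$ with $\eta^{a_z}\subseteq \theta\subseteq \mu^{a_z}$; the goal is to show that $\theta=\eta^{a_z}$ or $\theta=\mu^{a_z}$. Note first that $\theta\subseteq \mu^{a_z}$ forces $\theta(x)\le a$ for every $x\in G$. The central idea is to ``untwist'' $\theta$ back to the parent $L$-group $\mu$ by introducing the $L$-subset
\[\psi(x)\;=\;\theta(z^{-1}xz)\;\vee\;\eta(x).\]
Clearly $\eta\subseteq \psi$; and since $\theta(z^{-1}xz)\le \mu^{a_z}(z^{-1}xz)=a\wedge \mu(x)\le \mu(x)$, also $\psi\subseteq \mu$. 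Thus $\langle\psi\rangle\in L(\mu)$ satisfies $\eta\subseteq \langle\psi\rangle\subseteq \mu$, and maximality of $\eta$ forces $\langle\psi\rangle=\eta$ or $\langle\psi\rangle=\mu$.

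If $\langle\psi\rangle=\eta$, then $\psi\subseteq \eta$, so $\theta(z^{-1}xz)\le \eta(x)$ for each $x\in G$. Writing $y=z^{-1}xz$ and combining with $\theta(y)\le a$ gives $\theta(y)\le a\wedge \eta(zyz^{-1})=\eta^{a_z}(y)$, hence $\theta\subseteq \eta^{a_z}$ and so $\theta=\eta^{a_z}$.

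The substantive case is $\langle\psi\rangle=\mu$, where the goal becomes $\mu^{a_z}\subseteq \theta$. I extend conjugation from $L$-subgroups to $L$-subsets by setting $\psi^{a_z}(x):=a\wedge \psi(zxz^{-1})$. Using the chain hypothesis (so $\wedge$ distributes over $\vee$) together with $\theta\le a$ and $\eta^{a_z}\subseteq \theta$, a direct computation gives
\[\psi^{a_z}(x)\;=\;a\wedge\bigl(\theta(x)\vee \eta(zxz^{-1})\bigr)\;=\;\theta(x)\vee \eta^{a_z}(x)\;=\;\theta(x),\]
so $\langle \psi^{a_z}\rangle=\langle\theta\rangle=\theta$. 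It then suffices to establish the commutation identity $\langle\psi\rangle^{a_z}=\langle \psi^{a_z}\rangle$, which would immediately yield $\mu^{a_z}=\langle\psi\rangle^{a_z}=\theta$. I expect this commutation identity to be the main obstacle. My plan for proving it is to exploit Theorem \ref{gen}: for each level $t\le a$ one checks from the definition that $(\psi^{a_z})_t=z^{-1}\psi_t z$; since conjugation by $z$ is a group automorphism, $\langle z^{-1}\psi_t z\rangle=z^{-1}\langle \psi_t\rangle z$; and the chain hypothesis lets the supremum over $t$ interchange cleanly with the cap by $a$, producing the pointwise identity $\langle \psi^{a_z}\rangle(x)=a\wedge \langle \psi\rangle(zxz^{-1})=\langle \psi\rangle^{a_z}(x)$, which finishes the proof.
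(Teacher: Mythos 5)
Your proposal is correct, and while it starts from the same auxiliary object as the paper --- the $L$-subset $\psi(x)=\eta(x)\vee\theta(z^{-1}xz)$, which is exactly the paper's $\gamma$ --- it then takes a genuinely different and, in my view, cleaner route. The paper argues by contradiction: it verifies directly that $\gamma$ is an $L$-subgroup of $\mu$ and then uses the chain hypothesis in a delicate strictness analysis (resolving each meet $a\wedge\eta(zx_iz^{-1})$ as either $a$ or $\eta(zx_iz^{-1})$) to force $\eta\subsetneq\gamma\subsetneq\mu$. You instead pass to $\langle\psi\rangle$, which sidesteps the subgroup verification entirely (a nontrivial saving: the sup of two $L$-subgroups is not a subgroup in general, and the paper's displayed inequality $(p\wedge q)\vee(r\wedge s)\geq(p\vee r)\wedge(q\vee s)$ is not a valid lattice identity as written --- it is rescued only by the extra constraints $a\wedge\eta\subseteq\theta^{z^{-1}}\leq a$), and you convert the argument into a direct two-case analysis. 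The case $\langle\psi\rangle=\eta$ is handled correctly. In the case $\langle\psi\rangle=\mu$ your computation $\psi^{a_z}=\theta\vee\eta^{a_z}=\theta$ is right, and the commutation identity you defer to is indeed provable exactly as you sketch: for $t\leq a$ one has $(\psi^{a_z})_t=z^{-1}\psi_t z$, generation commutes with the automorphism $x\mapsto z^{-1}xz$, and Theorem \ref{gen} plus the interchange $a\wedge\bigvee_t(\cdot)=\bigvee_t(a\wedge\cdot)$ gives $\langle\psi\rangle^{a_z}=\langle\psi^{a_z}\rangle$ (in fact only the inclusion $\langle\psi\rangle^{a_z}\subseteq\langle\psi^{a_z}\rangle$ is needed, and the index shift $t\mapsto a\wedge t$ lands in the correct range because $\psi_{a\wedge t}\supseteq\psi_t$). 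A notable by-product: every step of your argument uses only the standing assumption that $L$ is completely distributive --- the two places where you invoke the chain hypothesis (distributing $a\wedge{}$ over a join, and interchanging $a\wedge{}$ with a supremum) are already guaranteed by complete distributivity --- so your proof actually establishes the theorem without assuming $L$ is a chain, which is strictly stronger than the paper's result. The only cosmetic omission, shared with the paper, is the check that $\eta^{a_z}$ is a \emph{proper} $L$-subgroup of $\mu^{a_z}$ when $\eta^{a_z}\neq\mu^{a_z}$; this is immediate since $\eta^{a_z}$ constant would force $a\leq\mathrm{tail}(\eta)\leq\mathrm{tail}(\mu)$ and hence $\eta^{a_z}=\mu^{a_z}$.
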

\begin{proof}
If $\eta^{a_z}=\mu^{a_z}$, then there is nothing to prove. So, let $\eta^{a_z} \subsetneq \mu^{a_z}$. Suppose, if possible, that $\eta^{a_z}$ is not a maximal $L$-subgroup of $\mu^{a_z}$. Then, there exists an $L$-subgroup $\theta$ of $\mu^{a_z}$ such that 
\begin{equation}
\eta^{a_z}\subsetneq \theta \subsetneq \mu^{a_z}.
\end{equation}
\noindent We shall construct an $L$-subgroup $\gamma$ of $\mu$ such that $\eta \subsetneq \gamma \subsetneq \mu$ which will contradict maximality of $\eta$. Define $\gamma ~: ~G \to L$  as follows:
\[ \gamma(x)= \eta(x)\vee \theta(z^{-1}xz)~\text{for all}~ x\in G.\]
 Firstly, we claim that $\eta\subseteq \gamma \subseteq \mu$. Clearly, $\eta \subseteq \gamma$. Next,  as $\theta \subsetneq \mu^{a_z}$, it follows that for all $x \in G$, $\theta (z^{-1}xz)\leq \mu^{a_z}(z^{-1}xz) = a \wedge \mu(x) \leq \mu(x)$. Also,  $\eta(x)\leq\mu(x)$. Consequently,
 \[
 \gamma(x)=\eta(x)\vee \theta (z^{-1}xz)\leq \mu(x) \text{ for all } x \in G.
 \]
 This proves the claim. Now, we shall show that $\gamma \in L(\mu)$. So let $x, y \in G$ and consider
\begin{eqnarray*}
	\gamma(xy)&=&\eta(xy)\vee \theta(z^{-1}xyz)\\
	&=& \eta(xy)\vee \theta((z^{-1}xz)(z^{1}yz))\\
	&\geq& \{\eta(x) \wedge \eta(y)\}\vee \{\theta(z^{-1}xz)\wedge\theta((z^{-1}yz))\}\\
	&\geq& \{\eta(x)\vee \theta(z^{-1}xz)\}\wedge\{\eta(y)\vee \theta(z^{-1}yz)\}\\
	&=& \gamma(x)\wedge \gamma(y).
\end{eqnarray*}
Similarly, we can verify that $\gamma(x^{-1})=\gamma(x)$. Thus $\gamma \in L(\mu)$. Next, we claim that 
\[\eta \subsetneq \gamma \subsetneq \mu.\]
In view of  (1), there exist $x_1, x_2 \in G$ such that 
\[ \eta^{a_z} (x_1) < \theta (x_1)~\text{and}~\theta (x_2) < \mu^{a_z}(x_2). \]
This implies
\begin{equation}
a\wedge\eta (zx_1z^{-1}) < \theta (x_1)~\text{and}~\theta(x_2)< a \wedge \mu(zx_2z^{-1})\leq a.
\end{equation}
Note that by (1), $\text{tip}~\eta^{a_z} =a \wedge \eta(e) \leq \text{tip}~\theta= \theta(e) \leq \text{tip}~ \mu^{a_z}= a \wedge \mu(e) \leq a$. Hence $\eta^{a_z}(x_1) < \theta(x_1) \leq \theta(e) \leq a$ implies $a \wedge \eta(zx_1z^{-1}) < a$. As $L$ is a chain, we must have $a\wedge\eta (zx_1z^{-1})=\eta (zx_1z^{-1})$.  Hence by (2), 
\begin{equation}
\eta (zx_1z^{-1}) < \theta (x_1).
\end{equation}
Now, consider
\begin{eqnarray*}
	\gamma(zx_1z^{-1})&=& \eta(zx_1z^{-1}) \vee \theta(z^{-1}(zx_1z^{-1})z)\\
	&=&\eta (zx_1z^{-1}) \vee \theta(x_1)\\
	&=&\theta(x_1) \qquad\qquad\qquad\qquad\qquad\qquad (\text{by}~(3))\\
	&>& \eta (zx_1z^{-1}).
\end{eqnarray*}
This implies $\eta \subsetneq \gamma$. Further, by (2), we have 
\begin{equation}
\theta(x_2)< \mu^{a_z}(x_2)=a \wedge \mu(zx_2z^{-1})\leq \mu(zx_2z^{-1}).
\end{equation}
In view of (1)
\begin{equation}\eta^{a_z}(x_2) = a \wedge \eta(zx_2z^{-1}) \leq \theta(x_2).\end{equation}
Since $L $ is a chain, either $a \wedge \eta(zx_2z^{-1})=a$ or $a \wedge \eta(zx_2z^{-1})=\eta(zx_2z^{-1})$. In the first case, we have 
\[
a \leq \theta(x_2)\leq \theta(e)\leq a.
\]
Thus $\theta(x_2)=a$. However, by (4), $\theta(x_2)< \mu^{a_z}(x_2)=a \wedge \mu(zx_2z^{-1}) \leq a$. That is $\theta (x_2) <a$. So there is a contradiction. Hence we must have, $a \wedge \eta(zx_2z^{-1})=\eta(zx_2z^{-1})$.  Now by (5),
\[
\eta(zx_2z^{-1})\leq \theta(x_2).
\]
Therefore, 
\[\gamma(zx_2z^{-1}) = \eta (zx_2z^{-1}) \vee \ \theta(x_2)=\theta(x_2)< \mu(zx_2z^{-1}). \text{~(By using (2))}\]
Consequently, $\gamma \subsetneq \mu$. This completes the proof of the claim. However, this a contradiction to the maximality of $\eta$ in $\mu$. Hence the result.

\end{proof}

\section{Conjugacy and Normality of $L$-subgroups}

In this section, we explore the inter-connections between the concepts of conjugacy and normality of $L$-subgroups. We prove significant results pertaining to the notions of conjugate $L$-subgroups and normalizer \cite{ajmal_nor} and explore the various similarities as well as peculiarities of these concepts compared to their group theoretic counterparts. We end this section by providing a new definition of the normalizer using the concept of conjugacy of $L$-subgroups. Thus this section demonstrates the compatibility of the conjugacy of $L$-subgroups with the several concepts developed so far in the study of $L$-subgroups of $L$-groups.

\begin{proposition}
	\label{nor_conj}
	Let $\eta \in L(\mu)$. Then, $\eta$ is a normal $L$-subgroup of $\mu$ if and only if  $\eta^{a_z} \subseteq \eta$ for every $L$-point $a_z \in \mu$. Moreover, if $\eta \in NL(\mu)$ and $\text{tip}(\eta^{a_z}) = \text{tip}(\eta)$, then $\eta^{a_z} = \eta$. 
\end{proposition}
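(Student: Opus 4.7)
The plan is to prove the biconditional by direct manipulation of the defining formula $\eta^{a_z}(x) = a \wedge \eta(zxz^{-1})$ in combination with the normality condition $\eta(yxy^{-1}) \geq \eta(x) \wedge \mu(y)$. The whole argument reduces to a well-chosen substitution in each direction.

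For the forward direction, I would assume $\eta \triangleleft \mu$ and fix an arbitrary $L$-point $a_z \in \mu$, so that $\mu(z) \geq a$. The key step is to apply the normality inequality with conjugating element $z^{-1}$ to the point $zxz^{-1}$, so that the two conjugations cancel:
\[ \eta(x) = \eta\bigl(z^{-1}(zxz^{-1})z\bigr) \geq \eta(zxz^{-1}) \wedge \mu(z^{-1}) \geq \eta(zxz^{-1}) \wedge a = \eta^{a_z}(x), \]
yielding $\eta^{a_z} \subseteq \eta$.

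For the backward direction, I would assume $\eta^{a_z} \subseteq \eta$ for every $L$-point $a_z \in \mu$, fix $x, y \in G$, and aim at the normality inequality $\eta(yxy^{-1}) \geq \eta(x) \wedge \mu(y)$. The natural choice is to take $a = \mu(y)$ and $z = y^{-1}$; since $\mu(y^{-1}) = \mu(y) = a$, we indeed have $a_z \in \mu$, and conjugation by $z$ cancels the outer conjugation by $y$. Evaluating the containment $\eta^{a_z} \subseteq \eta$ at the point $yxy^{-1}$ then gives
\[ \eta(yxy^{-1}) \geq \eta^{a_z}(yxy^{-1}) = a \wedge \eta\bigl(y^{-1} \cdot yxy^{-1} \cdot y\bigr) = \mu(y) \wedge \eta(x), \]
as required.

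For the \emph{moreover} clause, the forward direction already gives $\eta^{a_z} \subseteq \eta$, so only the reverse containment needs attention. By Remark~\ref{conj_tip}, the hypothesis $\text{tip}(\eta^{a_z}) = \text{tip}(\eta)$ reads $a \wedge \eta(e) = \eta(e)$, forcing $a \geq \eta(e) \geq \eta(x)$ for every $x \in G$. Invoking normality of $\eta$ together with $\mu(z) \geq a$ yields $\eta(zxz^{-1}) \geq \eta(x) \wedge \mu(z) \geq \eta(x) \wedge a = \eta(x)$, so that $\eta^{a_z}(x) = a \wedge \eta(zxz^{-1}) \geq \eta(x)$. There is no real obstacle to overcome; the entire proposition hinges on the substitution $z = y^{-1}$, $a = \mu(y)$ in the backward direction and on the observation that the tip hypothesis forces $a$ to dominate $\eta$ pointwise.
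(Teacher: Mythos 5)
Your proposal is correct and follows essentially the same route as the paper: the same substitution $\eta(x)=\eta(z^{-1}(zxz^{-1})z)$ in the forward direction, the same choice $a=\mu(y)$, $z=y^{-1}$ evaluated at $yxy^{-1}$ in the converse, and the same use of Remark~\ref{conj_tip} plus normality to get the reverse containment in the \emph{moreover} clause.
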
	
\begin{proof} Let $\eta$ be a normal $L$-subgroup of $\mu$ and $a_z$ be an $L$-point of $\mu$. Then, for all $x \in G$,
	\begin{equation*}
		\begin{split}
			\eta(x) &= \eta(z^{-1}(zxz^{-1})z) \\
			&\geq \eta(zxz^{-1}) \wedge \mu(z) \qquad\qquad~~ (\text{since $\eta$ is normal in $\mu$})\\
			&\geq \eta(zxz^{-1}) \wedge a \qquad\qquad \qquad (\text{since $a_z \in \mu$})\\
			&= \eta^{a_z}(x).
		\end{split}
	\end{equation*}
	Hence $\eta^{a_z} \subseteq \eta$. Conversely, suppose that $\eta^{a_z} \subseteq \eta$ for all $L$-points $a_z \in \mu$. Let $x, g \in G$ and let $a = \mu(g)$. Then $a_{g^{-1}} \in \mu$ and by the hypothesis, $\eta^{a_{g^{-1}}} \subseteq \eta$. Thus
	\[ \eta(gxg^{-1}) \geq \eta^{a_{g^{-1}}}(gxg^{-1}) = a \wedge \eta(g^{-1}(gxg^{-1})g) = \mu(g) \wedge \eta(x).  \]
	Therefore $\eta$ is a normal $L$-subgroup of $\mu$.
	
	\noindent Next, let $\eta$ be a normal $L$-subgroup of $\mu$ and $a_z$ be an $L$-point of $\mu$ such that $\text{tip}(\eta^{a_z}) = \text{tip}(\eta)$. Then, by Remark \ref{conj_tip}, $a \geq \text{tip}(\eta)$. Thus for all $x \in G$,
	\begin{equation*}
		\begin{split}
			\eta^{a_z}(x) &= a \wedge \eta(zxz^{-1}) \\
			&\geq a \wedge \eta(x) \wedge \mu(z) \qquad\qquad (\text{since $\eta$ is normal in $\mu$})\\
			&= a \wedge \eta(x)  \qquad\qquad\qquad~~~ (\text{since $a_z \in \mu$}) \\
			&= \eta(x). \qquad\qquad\qquad\qquad~ (\text{since $a \geq \eta(e)$})
		\end{split}
	\end{equation*}
	Hence $\eta \subseteq \eta^{a_z}$ and we conclude that $\eta = \eta^{a_z}$.
\end{proof}

\noindent In \cite{ajmal_nor}, Ajmal and Jahan have introduced the notion of the normalizer of an $L$-subgroup by introducing the coset of an $L$-subgroup with respect to an $L$-point. We recall these concepts below:

\begin{definition}(\cite{ajmal_nor})
	Let $\eta \in L(\mu)$ and let $a_x$ be an $L$-point of $\mu$. The left (respectively, right) coset of $\eta$ in $\mu$ with respect to $a_x$ is defined as the set product $a_x \circ \eta$ ($\eta \circ a_x$).
\end{definition}

\noindent From the definition of set product of two $L$-subsets, it can be easily seen that for all $z \in G$, 
\[ (a_x \circ \eta)(z) = a \wedge \eta(x^{-1}z) \quad \text{ and } \quad (\eta \circ a_x)(z) = a \wedge \eta(zx^{-1}). \]

\begin{definition}(\cite{ajmal_nor})
	Let $\eta \in L(\mu)$. The normalizer of $\eta$ in $\mu$, denoted by $N(\eta$), is the $L$-subgroup defined as follows:
	\[ N(\eta) = \bigcup \left\{ a_x \in \mu \mid a_x \circ \eta = \eta \circ a_x \right\}. \]
	$N(\eta)$ is the largest $L$-subgroup of $\mu$ such that $\eta$ is a normal $L$-subgroup of $N(\eta)$. Also, it has been established in \cite {ajmal_nor} that  $\eta$ is a normal $L$-subgroup of $\mu$ if and only if $N(\eta)=\mu$.
\end{definition}
\noindent Below, we demonstrate the conjugate of the normalizer $N(\eta)$ of the $L$-subgroup $\eta$ :
\begin{theorem}
	Let $\eta \in L(\mu)$ and $a_z$ be an $L$-point of $\mu$. Then, for all $g \in G$,
	\[ N(\eta)^{a_z}(g) = a \wedge N(\eta^{a_z})(g). \]
\end{theorem}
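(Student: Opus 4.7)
My plan is to unwind both sides by definition and then match two suprema. By the formula for the conjugate of an $L$-subgroup by an $L$-point (recorded just after Definition \ref{dfncon}), $N(\eta)^{a_z}(g) = a \wedge N(\eta)(zgz^{-1})$, so writing $g' = zgz^{-1}$ the identity reduces to
\[ a \wedge N(\eta)(g') \;=\; a \wedge N(\eta^{a_z})(g). \]

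Next, I would unfold each normalizer as a supremum. Using the explicit formulas $(b_x \circ \eta)(y) = b \wedge \eta(x^{-1}y)$ and $(\eta \circ b_x)(y) = b \wedge \eta(yx^{-1})$, the definition of $N(\cdot)$ gives
\[ N(\eta)(g') = \bigvee\{c \in L : c \leq \mu(g'),\ c \wedge \eta(g'^{-1}w) = c \wedge \eta(w g'^{-1}) \ \forall w\in G\}, \]
and an analogous expression for $N(\eta^{a_z})(g)$ (with $b \leq \mu(g)$, and $\eta^{a_z}$ in place of $\eta$ at the point $g$). Complete distributivity of $L$ lets me pull the outer $a \wedge$ into each supremum, so the task becomes showing that the two index sets, each meeted with $a$, produce the same join.

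The transport between the two pools rests on two observations. First, $a \wedge \mu(g) = a \wedge \mu(g')$, because $a \leq \mu(z)$ and $\mu \in L(G)$ give $\mu(g') \geq a \wedge \mu(g)$ and, symmetrically, $\mu(g) \geq a \wedge \mu(g')$. Second, using $\eta^{a_z}(x) = a \wedge \eta(zxz^{-1})$ and the bijective substitution $w = zyz^{-1}$ of $G$, the coset condition for $\eta^{a_z}$ at $g$ with parameter $b$ rewrites as
\[ (a \wedge b) \wedge \eta(g'^{-1}w) \;=\; (a \wedge b) \wedge \eta(w g'^{-1}) \quad \forall w \in G, \]
which is exactly the coset condition for $\eta$ at $g'$ with parameter $a \wedge b$.

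With these in hand, both inequalities are clean. For $\leq$, any admissible $c$ in the first pool produces $b := a \wedge c$ in the second pool: $b \leq a \wedge \mu(g') \leq \mu(g)$ by observation one, and $b \leq c$ preserves the coset identity, so $a \wedge c = a \wedge b$ contributes to $a \wedge N(\eta^{a_z})(g)$. For $\geq$, any admissible $b$ in the second pool produces $c := a \wedge b$ in the first pool: $c \leq a \wedge \mu(g) \leq \mu(g')$ again by observation one, and the coset identity transports by observation two, so $a \wedge b = a \wedge c$ contributes to $a \wedge N(\eta)(g')$. The main bookkeeping obstacle is keeping track of the floating $a$'s and making sure the coset identity is genuinely preserved (not just implied) in both directions; the absorption $c \leq a \Rightarrow c \wedge a = c$ handles this automatically, which is why taking $c := a \wedge b$ (rather than $c := b$) is the correct normalization.
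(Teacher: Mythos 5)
Your proposal is correct and follows essentially the same route as the paper's proof: both unfold the two normalizers as suprema over the index sets of admissible coset parameters, use complete distributivity to absorb the outer $a\wedge$, and transport elements between the two pools by meeting with $a$, with the coset identity carried across via the substitution $w=zyz^{-1}$ exactly as in the paper. The only cosmetic difference is that you phrase the transport as a single equivalence of rewritten conditions, whereas the paper verifies each implication separately.
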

\begin{proof}
	Let $g \in G$. Then, 
	\begin{equation*}
	\begin{split} 
	N(\eta)^{a_z}(g) &= a \wedge N(\eta)(zgz^{-1}) \\
		&= a \wedge \{ \vee \{ b \mid b_{zgz^{-1}} \in \mu \text{ and } b_{zgz^{-1}} \circ \eta = \eta \circ b_{zgz^{-1}} \}\} \\
		&= \vee \{ a \wedge b \mid b_{zgz^{-1}} \in \mu \text{ and } b_{zgz^{-1}} \circ \eta = \eta \circ b_{zgz^{-1}} \},
	\end{split}
	\end{equation*}
	since $L$ is completely distributive. Similarly,
	\[ N(\eta^{a_z})(g) = \vee \{ b \mid b_g \in \mu \text{ and } b_g \circ \eta^{a_z} = \eta^{a_z} \circ b_g \}. \]
	Let 
	\[ L_1 = \{ b \in L \mid b_g \in \mu \text{ and } b_g \circ \eta^{a_z} = \eta^{a_z} \circ b_g \} \]
	and
	\[ L_2 = \{ b \in L \mid b_{zgz^{-1}} \in \mu \text{ and } b_{zgz^{-1}} \circ \eta = \eta \circ b_{zgz^{-1}} \}. \]
	We claim that if  $b \in L_2$, then $a \wedge b \in L_1$. For this, let $b \in L_2$. Then,
	\[ b_{zgz^{-1}} \in \mu \text{ and } b_{zgz^{-1}} \circ \eta = \eta \circ b_{zgz^{-1}}. \]
	Firstly, since $ b_{zgz^{-1}} \in \mu$,	$\mu(zgz^{-1}) \geq b$.	This, and the fact that $a_z \in \mu$, implies 
	\[ \mu(g) = \mu(z^{-1}(zgz^{-1})z) \geq \mu(z) \wedge \mu(zgz^{-1}) \geq a \wedge b. \]
	Hence $(a \wedge b)_g \in \mu$. Next, let $x \in G$. Since	$b_{zgz^{-1}} \circ \eta = \eta \circ b_{zgz^{-1}}$, we have
	\[ (b_{zgz^{-1}} \circ \eta)(x) = (\eta \circ b_{zgz^{-1}})(x), \]
	that is,
	\[ b \wedge \eta((zgz^{-1})^{-1}x) = b \wedge \eta(x(zgz^{-1})^{-1}). \]
	Hence
	\[  b \wedge \eta(zg^{-1}z^{-1}x) = b \wedge \eta(xzg^{-1}z^{-1}). \]
	This implies
	\[ a \wedge (b \wedge \eta(zg^{-1}z^{-1}x)) = a \wedge (b \wedge \eta(xzg^{-1}z^{-1})), \]
	or equivalently
	\begin{equation}{\label{eq_norm1}}
	 (a \wedge b) \wedge (a \wedge \eta(zg^{-1}z^{-1}x)) = (a \wedge b) \wedge (a \wedge \eta(xzg^{-1}z^{-1}))~\text{for all} ~x \in G.
	\end{equation}
	Now, let $y$ be any arbitrary element of $G$. Since equation (\ref{eq_norm1}) holds for all $x \in G$, taking $x = zyz^{-1}$, we get
	\[ (a \wedge b) \wedge (a \wedge \eta(zg^{-1}yz^{-1})) = (a \wedge b) \wedge (a \wedge \eta(zyg^{-1}z^{-1})). \]
	This implies
	\[ (a \wedge b) \wedge \eta^{a_z}(g^{-1}y) = (a \wedge b) \wedge \eta^{a_z}(yg^{-1}). \] 
	Therefore
	\[ ((a \wedge b)_g \circ \eta^{a_z})(y) = (\eta^{a_z} \circ (a \wedge b)_g)(y). \]
	Since, by assumption, $y$ is an arbitrary element of $G$, we conclude that 
	\[ (a \wedge b)_g \circ \eta^{a_z} = \eta^{a_z} \circ (a \wedge b)_g. \]
	Therefore we have shown that 
	\[ (a \wedge b)_g \in \mu \text{ and } (a \wedge b)_g \circ \eta^{z_x} = \eta^{a_z} \circ (a \wedge b)_g. \]
	Hence $(a \wedge b) \in L_1$. Finally,
	\begin{equation*}
	\begin{split}
		N(\eta)^{a_z}(g) &= a \wedge N(\eta)(zgz^{-1}) \\
			&= a \wedge (\vee \{ b \mid b \in L_2 \}) \\
			&= a \wedge \{\vee \{ a \wedge b \mid b \in L_2 \}\} \\
			&\leq a \wedge \{\vee \{ c \mid c \in L_1\}\} \\
			&= a \wedge N(\eta^{a_z})(g).
	\end{split}
	\end{equation*}
	For the reverse inequality, let $b \in L_1$. We show that $a \wedge b \in L_2$. Firstly, since $b \in L_1$, $b_g \in \mu$. Thus
	\[ \mu(zgz^{-1}) \geq \mu(z) \wedge \mu(g) \geq a \wedge b. \]
	Therefore $(a \wedge b)_{zgz^{-1}} \in \mu$. Next, let $x \in G$. Since $b_g \circ \eta^{a_z} = \eta^{a_z} \circ b_g$, we have 
	\[ b \wedge \eta^{a_z}(g^{-1}x) = b \wedge \eta^{a_z}(xg^{-1}), \]
	that is, 
	\[ b \wedge (a \wedge \eta(zg^{-1}xz^{-1})) = b \wedge (a \wedge \eta(zxg^{-1}z^{-1})). \]
	Thus 
	\begin{equation}\label{eq_norm2}
		(a \wedge b) \wedge \eta(zg^{-1}xz^{-1}) = (a \wedge b) \wedge \eta(zxg^{-1}z^{-1}) \text{~for all}~x \in G.
	\end{equation}
	Let $y$ be any arbitrary element of $G$. Since equation (\ref{eq_norm2}) holds for all $x \in G$, taking $x = z^{-1}yz$, we get
	\[ (a \wedge b) \wedge \eta(zg^{-1}z^{-1}y) = (a \wedge b) \wedge \eta(yzg^{-1}z^{-1}), \]
	or equivalently,
	\[((a \wedge b)_{zgz^{-1}} \circ \eta)(y) = (\eta \circ (a \wedge b)_{zgz^{-1}})(y). \]
	Since, by assumption, $y$ is an arbitrary element of $G$, we conclude that 
	\[ (a \wedge b)_{zgz^{-1}} \circ \eta = \eta \circ (a \wedge b)_{zgz^{-1}} \]
	Hence we have shown that 
	\[ (a \wedge b)_{zgz^{-1}} \in \mu \text{ and } (a \wedge b)_{zgz^{-1}} \circ \eta = \eta \circ (a \wedge b)_{zgz^{-1}}. \]
	Thus $(a \wedge b) \in L_2$. Therefore
	\begin{equation*}
	\begin{split}
		N(\eta)^{a_z}(g) &= a \wedge N(\eta)(zgz^{-1}) \\
		&= a \wedge (\vee \{ b \mid b \in L_2 \}) \\
		&\geq a \wedge \{\vee \{ a \wedge b \mid b \in L_1 \}\} \\
		&= a \wedge \{\vee \{ c \mid c \in L_1\}\} \\
		&= a \wedge N(\eta^{a_z})(g).
	\end{split}
	\end{equation*}
    We conclude that 
    \[ N(\eta)^{a_z}(g) = a \wedge N(\eta^{a_z})(g). \]
\end{proof}

\noindent The following result follows immediately from the above theorem:

\begin{corollary}
	Let $H$ be a subgroup of a group $G$ and $x \in G$. Then,
	\[ N({(1_H)}^{1_x}) = (N(1_H))^{1_x}. \]
\end{corollary}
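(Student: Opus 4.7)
The plan is to derive this corollary as a direct specialization of the preceding theorem, taking $\mu = 1_G$, $\eta = 1_H$, and $a_z = 1_x$. First I would check that the hypotheses of the theorem are satisfied: since $H$ is a subgroup of $G$, the characteristic function $1_H$ is an $L$-subgroup of $1_G$, and since $1_G(x) = 1 \geq 1$, the $L$-point $1_x$ indeed lies in $1_G$. So the theorem applies.

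Next I would substitute $a = 1$ and $z = x$ into the identity
\[
N(\eta)^{a_z}(g) = a \wedge N(\eta^{a_z})(g)
\]
to obtain, for every $g \in G$,
\[
N(1_H)^{1_x}(g) = 1 \wedge N((1_H)^{1_x})(g) = N((1_H)^{1_x})(g).
\]
Since this equality holds pointwise, the two $L$-subgroups coincide, giving the desired conclusion $N((1_H)^{1_x}) = (N(1_H))^{1_x}$.

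Because the previous theorem does all the substantive work, there is no real obstacle here; the only thing to be careful about is the order of the two $L$-subgroups in the conclusion (writing the conjugate of the normalizer versus the normalizer of the conjugate) and confirming that $a = 1$ collapses the meet on the right-hand side. No separate verification of the normality/coset conditions is required, since these are already encoded in the statement of the theorem being invoked.
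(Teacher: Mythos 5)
Your proposal is correct and matches the paper's intent exactly: the paper simply notes that the corollary ``follows immediately from the above theorem,'' and your specialization $\mu = 1_G$, $\eta = 1_H$, $a = 1$, $z = x$, together with the observation that $1 \wedge b = b$ collapses the meet, is precisely that immediate deduction.
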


\begin{example} \label{ex_conjnorm}
	Let $G = D_{16}$, where $D_{16}$ denotes the dihedral group of order $16$, that is,
	\[ D_{16} = \langle r,s \mid r^8 = s^2 = e, rs = sr^{-1} \rangle. \]
	The dihedral group $D_8$ of order $8$ be the dihedral subgroup of $D_{16}$,  where
	\[ D_8 = \langle r^2, s \mid (r^2)^4 = s^2 = e \rangle. \]
	Define $\mu : D_{16} \rightarrow [0,1]$ as follows: for all $z \in D_{16}$,
	\[ \mu(z) = \begin{cases}
		\frac{1}{2}	&\text{if } z \in D_8,\\
		\frac{1}{8} &\text{if } z \in D_{16} \setminus D_8.
	\end{cases} \]
	Note that every non-empty level subset $\mu_t$ is a subgroup of $D_{16}$. Hence by Theorem \ref{lev_gp}, $\mu \in L(D_{16})$. Now, we define $\eta \in L^{\mu}$ as follows:
	\[ \eta(z) = \begin{cases}
		\frac{1}{4} &\text{if } z \in \langle s \rangle, \\
		\frac{1}{16} &\text{if } z \in D_8 \setminus \langle s \rangle, \\
		\frac{1}{32} &\text{if } z \in D_{16} \setminus D_8.
	\end{cases} \]
	Since every non-empty level subset $\eta_t$ is a subgroup of $\mu_t$, by Theorem \ref{lev_sgp}, $\eta \in L(\mu)$. Now, let $a = 1/12$. Then, $a_r \in \mu$, since $\mu(r) = 1/8 \geq 1/12 = a$. We evaluate $N(\eta^{a_r})$ and $N(\eta)^{a_r}$ and show that $N(\eta)^{a_r}(z) = a \wedge N(\eta^{a_r})(z)$ for all $z \in D_{16}$. 
	
	\noindent Firstly, if $H$ denotes the subgroup $\{e, r^4, s, sr^4 \}$ of $D_{16}$, then it is easy to see that 
	\[ N(\eta)(z) = \begin{cases}
		\frac{1}{2} &\text{if } z \in H, \\
		\frac{1}{16} &\text{if } z \in D_{16} \setminus H.
	\end{cases} \]
	Thus the conjugate $L$-subgroup $N(\eta)^{a_r}$ of $N(\eta)$, defined as
	\[ N(\eta)^{a_r}(z) = a \wedge N(\eta)(rzr^{-1}), \]
	is given by
	\[ N(\eta)^{a_r}(z) = \begin{cases}
		\frac{1}{12} &\text{if } z \in H, \\
		\frac{1}{16} &\text{if } z \in D_{16} \setminus H.
	\end{cases} \] 
	Now, the conjugate $L$-subgroup $\eta^{a_r}$ of $\eta$ is given by
	\[ \eta^{a_r}(z) = \begin{cases}
		\frac{1}{12} &\text{if } z \in \langle sr^6 \rangle, \\
		\frac{1}{16} &\text{if } z \in D_8 \setminus \langle sr^2 \rangle, \\
		\frac{1}{32} &\text{if } z \in D_{16} \setminus D_8.
	\end{cases} \]
	Thus the normalizer of $N(\eta^{a_r})$ is 
	\[ N(\eta^{a_r})(z) = \begin{cases}
		\frac{1}{2} &\text{if } z \in H, \\
		\frac{1}{16} &\text{if } z \in D_{16} \setminus H.
	\end{cases} \]
	From this, we can easily see that $N(\eta)^{a_r}(z) = a \wedge N(\eta^{a_r})(z)$ for all $z \in D_{16}$.
\end{example}

\begin{proposition}\label{inc_lvl}
	Let $\eta, \nu \in L(\mu)$. Then, $\eta \subseteq \nu$ if and only if $\eta_t \subseteq \nu_t$ for all $t \leq \text{tip }(\eta)$.
\end{proposition}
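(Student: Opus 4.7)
The proposition is a standard level-subset characterization of inclusion for $L$-subgroups, and the plan is to prove the two directions separately. The forward direction is essentially a restriction of the general fact recorded in the preliminaries (namely, that $\mu \subseteq \nu$ implies $\mu_a \subseteq \nu_a$ for every $a \in L$): if $\eta \subseteq \nu$, then for any $t \in L$ and any $x \in \eta_t$ we have $\nu(x) \geq \eta(x) \geq t$, so $x \in \nu_t$. In particular this works for all $t \leq \text{tip}(\eta)$, giving one implication with no real content beyond the definition of level subsets.

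For the converse, assume $\eta_t \subseteq \nu_t$ for every $t \leq \text{tip}(\eta)$. The key move is to evaluate the hypothesis at the pointwise value of $\eta$. Given an arbitrary $x \in G$, set $t_0 = \eta(x)$. Since $t_0 \leq \text{tip}(\eta)$ by definition of the tip, the hypothesis applies with this choice of $t$. Moreover, $x \in \eta_{t_0}$ trivially (because $\eta(x) = t_0 \geq t_0$), so by the assumed inclusion $x \in \nu_{t_0}$, which means $\nu(x) \geq t_0 = \eta(x)$. Since $x$ was arbitrary, $\eta \subseteq \nu$.

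There is essentially no obstacle here; the only mild subtlety is to observe that the restriction $t \leq \text{tip}(\eta)$ in the hypothesis is exactly what is needed for the argument (and suffices), because any $t_0 = \eta(x)$ automatically satisfies this bound, while for $t > \text{tip}(\eta)$ the level set $\eta_t$ is empty and so the inclusion $\eta_t \subseteq \nu_t$ would be vacuous anyway. Thus the proof is short and the statement is best presented as two brief paragraphs, one for each implication.
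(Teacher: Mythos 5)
Your proof is correct, and the paper itself states Proposition \ref{inc_lvl} without proof, so there is no authorial argument to compare against; your two-direction argument (the forward implication being the general level-set monotonicity fact from the preliminaries, and the converse obtained by evaluating the hypothesis at $t_0=\eta(x)\leq\text{tip}(\eta)$) is exactly the standard proof one would supply. The only negligible imprecision is the closing aside about $t>\text{tip}(\eta)$, which is not needed for the statement and does not affect the argument.
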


\begin{lemma}\label{lemma_con}
	Let $\eta$ be an $L$-subgroup of $\mu$ and $a_z \in \mu$. Then, $\eta^{a_z} \subseteq \eta$ if and only if $\eta^{a_{z^{-1}}} \subseteq \eta$. 
\end{lemma}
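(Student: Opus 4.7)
The plan is to reduce both inclusions to conditions on the level subgroups of $\eta$ and then identify them via the classical conjugation action. Observe first that $a_{z^{-1}}\in\mu$ since $\mu(z^{-1})=\mu(z)\ge a$, so the statement is well-posed, and by Remark \ref{conj_tip} both $\eta^{a_z}$ and $\eta^{a_{z^{-1}}}$ share the tip $t_0:=a\wedge\text{tip}(\eta)$.

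Using the explicit formula $\eta^{a_z}(x)=a\wedge\eta(zxz^{-1})$, a direct computation gives $(\eta^{a_z})_t=z^{-1}\eta_tz$ for every $t\le a$ (and the level is empty otherwise), and analogously $(\eta^{a_{z^{-1}}})_t=z\eta_tz^{-1}$ for every $t\le a$. Combining this with Proposition \ref{inc_lvl}, the containment $\eta^{a_z}\subseteq\eta$ is equivalent to the family of classical containments
$$z^{-1}\eta_tz\subseteq\eta_t \quad\text{for all } t\le t_0,$$
while $\eta^{a_{z^{-1}}}\subseteq\eta$ is equivalent to $z\eta_tz^{-1}\subseteq\eta_t$ for all $t\le t_0$. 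Theorem \ref{lev_sgp} supplies that each $\eta_t$ is a subgroup of $G$.

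It then suffices to prove the purely group-theoretic statement that for a subgroup $H$ of $G$ and $z\in G$, one has $z^{-1}Hz\subseteq H$ if and only if $zHz^{-1}\subseteq H$; applied with $H=\eta_t$ for each $t\le t_0$, this together with the level-set translation finishes the lemma. The cleanest route to the classical statement is to observe that conjugation by $z$ is a group automorphism of $G$ that, under either one-sided containment, restricts to an injective endomorphism of $H$; this endomorphism is then surjective, upgrading the inclusion to the equality $z^{-1}Hz=H$, from which both directions follow at once. The main obstacle is precisely this classical upgrade from one-sided containment to equality: it is immediate by comparison of orders when $H$ is finite (as in the paper's examples), while in general it relies on the induced endomorphism of $H$ being surjective. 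All genuinely $L$-theoretic content of the lemma is absorbed into the level-set reduction, with no further subtlety beyond translating between $L$-subsets and their classical counterparts.
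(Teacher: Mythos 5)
Your level-set reduction is exactly the paper's route: the published proof likewise invokes Proposition \ref{inc_lvl} together with Theorem \ref{lvl_conj} to translate $\eta^{a_z}\subseteq\eta$ into $z^{-1}\eta_t z\subseteq\eta_t$ for all $t\le a\wedge\eta(e)$, and then argues with the classical subgroups $\eta_t$. The problem is the classical step. You assert that the injective endomorphism of $H=\eta_t$ induced by conjugation ``is then surjective,'' but an injective endomorphism of an infinite group need not be surjective (doubling on $\mathbb{Z}$), and the classical equivalence you reduce to is false in general: in the Baumslag--Solitar group $G=\langle a,t\mid tat^{-1}=a^2\rangle$ with $H=\langle a\rangle$ one has $tHt^{-1}=\langle a^2\rangle\subseteq H$ while $t^{-1}Ht\not\subseteq H$. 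Taking $\mu=1_G$, $\eta=1_H$ and the $L$-point $1_{t^{-1}}$ turns this into a counterexample to the lemma itself as stated, since no finiteness hypothesis appears anywhere. You correctly flag this upgrade from one-sided containment to equality as ``the main obstacle,'' but you do not close it, and it cannot be closed without an additional hypothesis (for instance $G$ finite, each level subgroup $\eta_t$ finite, or $z$ of finite order).

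For what it is worth, the paper's own proof commits the identical leap: from ${\eta_t}^{z^{-1}}\subseteq\eta_t$ it concludes $z^{-1}\in N(\eta_t)$, which presupposes the equality $z^{-1}\eta_t z=\eta_t$ rather than the one-sided containment actually available, and only then uses that $N(\eta_t)$ is a subgroup to pass from $z^{-1}$ to $z$. So your proposal reproduces the published argument faithfully, gap included; to make either version rigorous one must add a hypothesis under which injective endomorphisms of the level subgroups are automorphisms, finiteness being the natural choice and sufficient for all the examples in the paper.
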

\begin{proof}
	 Suppose that $\eta^{a_z} \subseteq \eta$. Then, by Proposition \ref{inc_lvl}, $(\eta^{a_z})_t \subseteq \eta_t$ for all $t \leq \text{tip }(\eta^{a_z}) = a \wedge \eta(e)$. By Theorem \ref{lvl_conj}, $(\eta^{a_z})_t = {\eta_t}^{z^{-1}}$ for all $t \leq a \wedge \eta(e)$. Thus
	\[ {\eta_t}^{z^{-1}} \subseteq \eta_t \text{ for all } t \leq a \wedge \eta(e). \]
	This implies $z^{-1} \in N(\eta_t)$ for all $t \leq a \wedge \eta(e)$. Since the normalizer of $\eta_t$ is a subgroup of $G$, $z \in N(\eta_t)$ for all $t \leq a \wedge \eta(e)$. Thus
	\[ {\eta_t}^z \subseteq \eta_t \text{ for all } t \leq a \wedge \eta(e). \]
	Again, by Theorem \ref{lvl_conj}, $(\eta^{a_{z^{-1}}})_t = {\eta_t}^z$ for all $t \leq a \wedge \eta(e) $. Thus $(\eta^{a_{z^{-1}}})_t \subseteq \eta_t$ for all $t \leq a \wedge \eta(e)= \text{tip }(\eta^{a_z})$. Hence by Proposition \ref{inc_lvl}, $\eta^{a_{z^{-1}}} \subseteq \eta$.
	
	\noindent  The converse part can be obtained by replacing $z$ by $z^{-1}$ in the above exposition.	
\end{proof}

\noindent We wish to characterize the concept of normalizer of an $L$-subgroup with the help of the notion of conjugacy like their classical counterparts. In order to achieve this goal, we prove the following lemma:
\begin{lemma}
	Let $\eta$ be an $L$-subgroup of $\mu$ and $a_z \in \mu$. Then,
	\[
	\eta \circ a_z = a_z \circ \eta \text{~if and only if~} \eta^{a_z} \subseteq \eta.
	\]
\end{lemma}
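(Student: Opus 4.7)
The plan is to unfold the three relevant definitions and then move everything through elementary substitutions, using the preceding Lemma \ref{lemma_con} to handle the converse direction. Recall that for any $L$-point $a_z$ of $\mu$ and any $g\in G$,
\[
(a_z\circ\eta)(g)=a\wedge\eta(z^{-1}g),\qquad (\eta\circ a_z)(g)=a\wedge\eta(gz^{-1}),\qquad \eta^{a_z}(x)=a\wedge\eta(zxz^{-1}).
\]
So the statement reduces to relating the equality $a\wedge\eta(z^{-1}g)=a\wedge\eta(gz^{-1})$ (for all $g$) with the inequality $a\wedge\eta(zxz^{-1})\leq\eta(x)$ (for all $x$).

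For the forward direction, I would assume $a_z\circ\eta=\eta\circ a_z$, so $a\wedge\eta(z^{-1}g)=a\wedge\eta(gz^{-1})$ for every $g\in G$. Substituting $g=zx$ yields
\[
a\wedge\eta(x)\;=\;a\wedge\eta(zxz^{-1})\;=\;\eta^{a_z}(x),
\]
and since $a\wedge\eta(x)\leq\eta(x)$, this immediately gives $\eta^{a_z}\subseteq\eta$.

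For the converse, I would assume $\eta^{a_z}\subseteq\eta$. By Lemma \ref{lemma_con}, we also have $\eta^{a_{z^{-1}}}\subseteq\eta$, so both
\[
a\wedge\eta(zxz^{-1})\;\leq\;\eta(x)\qquad\text{and}\qquad a\wedge\eta(z^{-1}xz)\;\leq\;\eta(x)
\]
hold for every $x\in G$. Since the left-hand sides are bounded above by $a$, each of them is actually $\leq a\wedge\eta(x)$. Now given an arbitrary $g\in G$, I substitute $x=z^{-1}g$ into the first inequality to obtain $a\wedge\eta(gz^{-1})\leq a\wedge\eta(z^{-1}g)$, and substitute $x=gz^{-1}$ into the second to obtain $a\wedge\eta(z^{-1}g)\leq a\wedge\eta(gz^{-1})$. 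Combining these two gives $(a_z\circ\eta)(g)=(\eta\circ a_z)(g)$ for all $g\in G$, and hence $a_z\circ\eta=\eta\circ a_z$.

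The forward direction is a single substitution, so it is routine; the only slightly delicate point is the converse, where one must notice that $\eta^{a_z}\subseteq\eta$ alone produces an inequality between $\eta(gz^{-1})$ and $\eta(z^{-1}g)$ in just one direction. This is exactly what Lemma \ref{lemma_con} is designed to patch: invoking it supplies the companion containment $\eta^{a_{z^{-1}}}\subseteq\eta$, whose substitution yields the reverse inequality. Thus the main (and really the only) conceptual obstacle is recognising that the symmetry required for equality of the two cosets is precisely the symmetry in $z\leftrightarrow z^{-1}$ provided by the preceding lemma.
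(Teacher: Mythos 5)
Your proof is correct and follows essentially the same route as the paper's: the forward direction is the same single substitution $g=zx$, and the converse obtains the inclusion $\eta\circ a_z\subseteq a_z\circ\eta$ directly from $\eta^{a_z}\subseteq\eta$ and the reverse inclusion from $\eta^{a_{z^{-1}}}\subseteq\eta$ via Lemma \ref{lemma_con}, exactly as in the paper. The only difference is presentational (you phrase the containments as explicit inequalities after substitution rather than as a chain of coset evaluations), so nothing further is needed.
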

\begin{proof}
	Let $\eta \circ a_z = a_z \circ \eta$ and $x\in G$. Then, consider
	\begin{eqnarray*}
		\eta^{a_z}(x) &=& a \wedge \eta (zxz^{-1})\\
		&=& (\eta \circ a_z)(zx)\\
		&=& (a_z \circ \eta)(zx)~\qquad\qquad\qquad \text{(by the hypothesis)}\\
		&=& a \wedge \eta (z^{-1}(zx))\\
		&=&a \wedge \eta(x)\\
		&\leq& \eta(x).
	\end{eqnarray*}
	Thus $\eta^{a_z} \subseteq \eta$. \\
	 Let $\eta^{a_z} \subseteq \eta$ . We shall show that . $\eta \circ {a_z} = a_z \circ \eta$. Let $x\in G$ and consider
	\begin{eqnarray*}
		(\eta \circ {a_z})(x) &=& a \wedge \eta (xz^{-1})\\
		&=& a \wedge \eta (z (z^{-1}x)z^{-1})\\
		&=& a \wedge \eta^{a_z}(z^{-1}x)\\
		&\leq& a \wedge \eta(z^{-1}x) \qquad\qquad ~\text{(by the hypothesis,~$\eta^{a_z} \subseteq \eta$)}\\
		&=& (a_z \circ \eta)(x).
	\end{eqnarray*}
	Thus $\eta \circ a_z \subseteq   a_z\circ \eta$. For the reverse inclusion, note that by Lemma \ref{lemma_con}, $\eta^{a_{z^{-1}}} \subseteq \eta$. Thus
	\begin{eqnarray*}
		(a_z \circ \eta)(x) &=& a \wedge \eta (z^{-1}x)\\
		&=& a \wedge \eta (z^{-1} (xz^{-1})z)\\
		&=& a \wedge \eta^{a_{z^{-1}}}(xz^{-1})\\
		&\leq& a \wedge \eta(xz^{-1}) \qquad\qquad ~\text{(since $\eta^{a_{z^{-1}}} \subseteq \eta$)}\\
		&=& (\eta \circ a_z)(x).
	\end{eqnarray*}
	Thus $a_z \circ \eta \subseteq \eta \circ a_z$ and we conclude that $a_z \circ \eta = \eta \circ a_z$.
\end{proof}

\noindent Thus in view of the above lemma, we have the following definition of the normalizer $N(\eta)$ of an $L$-subgroup $\eta$ of $\mu$:

\begin{definition}\label{def_conjnorm}
	Let $\eta \in L(\mu)$. The normalizer of $\eta$ in $\mu$, denoted by $N(\eta$), is the $L$-subgroup defined as follows:
	\[ N(\eta) = \bigcup \left\{ a_z \in \mu \mid  \eta^{a_z} \subseteq \eta \right\}. \]
\end{definition}

\noindent We demonstrate the above definition of the normalizer with the following example:

\begin{example}
	Let $G=D_{16}$ and let $\mu$ and $\eta$ be the $L$-subgroups of $G$ defined in Example \ref{ex_conjnorm}. We use Definition \ref{def_conjnorm} to determine the normalizer of $\eta$ in $\mu$. Note that for $x \in G$,
	\begin{equation}\label{ex_defnorm}
		 \bigcup \left\{ a_z \in \mu \mid  \eta^{a_z} \subseteq \eta \right\}(x) = \bigvee \left\{ a \in L \mid a \leq \mu(x) \text{ and } \eta^{a_x} \subseteq \eta \right\} . 
	\end{equation}
	Taking $x = s$ in Equation \ref{ex_defnorm}, we see that for all $g \in G$,
	\[ \eta^{a_s}(g) = a \wedge \eta(sgs^{-1}) = a \wedge \eta(g). \]
	Thus $\eta^{a_s}(g) \leq \eta(g)$ for all $a \in L$. This implies
	\[ \bigvee \left\{ a \in L \mid a \leq \mu(s) \text{ and } \eta^{a_s} \subseteq \eta \right\} = \mu(s) = \frac{1}{2}. \]
	Next, taking $x=r$ in Equation \ref{ex_defnorm},
	\begin{equation*}
	\begin{split}
		\eta^{a_r}(g) &= a \wedge \eta(rgr^{-1}) \\
		& = \begin{cases}
			a \wedge \frac{1}{4} &\text{if } x \in \langle sr^6 \rangle, \\
			a \wedge \frac{1}{16} &\text{if } x \in D_8 \setminus \langle sr^6 \rangle,\\
			a \wedge \frac{1}{32} &\text{if } x \in D_{16} \setminus D_8.
		\end{cases}
	\end{split} 	 
	\end{equation*}
	Hence it is clear that $\eta^{a_r} \subseteq \eta$ if and only if $a \leq \frac{1}{16}$. Thus
	\[ \bigvee \left\{ a \in L \mid a \leq \mu(r) \text{ and } \eta^{a_r} \subseteq \eta \right\} =  \frac{1}{16}. \]
	Using similar computations, we get
	\[ \bigcup \left\{ a_z \in \mu \mid  \eta^{a_z} \subseteq \eta \right\}(x) = \begin{cases}
		\frac{1}{2} &\text{if } x \in H, \\
		\frac{1}{16} &\text{if } x \in D_{16} \setminus H.
	\end{cases} \]
	which is the normalizer N($\eta$) of $\eta$ in $\mu$ obtained in Example \ref{ex_conjnorm}.
\end{example}

\section{Conclusion}
In classical group theory, the conjugate subgroups play an indispensable role in the studies of normality. In fact, the normalizer of a subgroup $H$ of a group $G$ can be defined as the collection of all elements $x$ in $G$ such that the conjugate $H^x$ of $H$ with respect to $x$ is contained in $H$. While the notion of the normalizer of an $L$-subgroup was efficiently introduced in \cite{ajmal_nor}, the concept of the conjugate of an $L$-subgroup that was compatible with the normality and the normalizer of $L$-subgroups was absent. We have, in this paper, succeeded in providing such a notion of the conjugate. Hopefully, the concepts introduced in this paper will play a significant role in further studies of $L$-algebraic structures. 

The research in the discipline of fuzzy group theory came to a halt after Tom Head’s metatheorem and subdirect product theorem. This is because most of the concepts and results in the studies of fuzzy algebra could be established through simple applications of the metatheorem and the subdirect
product theorem. However, the metatheorem and the subdirect product theorems are not applicable in the $L$-setting. Hence we suggest the researchers pursuing studies in these areas to investigate the properties of $L$-subalgebras of an $L$-algebra rather than $L$-subalgebras of classical algebra.

\section*{Acknowledgements}
The second author of this paper was supported by the Senior Research Fellowship jointly funded by CSIR and UGC, India during the course of development of this paper.

\end{document}